\begin{document}

	\thispagestyle{empty}

	\def\theequation{\arabic{section}.\arabic{equation}}

	\newcommand{\codim}{\mbox{{\rm codim}$\,$}}
	\newcommand{\stab}{\mbox{{\rm stab}$\,$}}
	\newcommand{\lr}{\mbox{$\longrightarrow$}}

	\newcommand{\be}{\begin{equation}}
		\newcommand{\ee}{\end{equation}}
	
	\newtheorem{guess}{Theorem}[section]
	\newcommand{\bth}{\begin{guess}$\!\!\!${\bf }~}
		\newcommand{\eeth}{\end{guess}}
	\renewcommand{\bar}{\overline}
	\newtheorem{propo}[guess]{Proposition}
	\newcommand{\bpropo}{\begin{propo}$\!\!\!${\bf }~}
		\newcommand{\epropo}{\end{propo}}
	
	\newtheorem{lema}[guess]{Lemma}
	\newcommand{\blem}{\begin{lema}$\!\!\!${\bf }~}
		\newcommand{\elem}{\end{lema}}
	
	\newtheorem{defe}[guess]{Definition}
	\newcommand{\bdefe}{\begin{defe}$\!\!\!${\bf }~}
		\newcommand{\edefe}{\end{defe}}
	
	\newtheorem{coro}[guess]{Corollary}
	\newcommand{\bcor}{\begin{coro}$\!\!\!${\bf }~}
		\newcommand{\ecor}{\end{coro}}
	
	\newtheorem{rema}[guess]{Remark}
	\newcommand{\brem}{\begin{rema}$\!\!\!${\bf }~\rm}
		\newcommand{\erem}{\end{rema}}
	
	\newtheorem{exam}[guess]{Example}
	\newcommand{\beg}{\begin{exam}$\!\!\!${\bf }~\rm}
		\newcommand{\eeg}{\end{exam}}
	
	\newtheorem{notn}[guess]{Notation}
	\newcommand{\bnot}{\begin{notn}$\!\!\!${\bf }~\rm}
		\newcommand{\enot}{\end{notn}}
	\newcommand{\cw} {{\mathcal W}}
	\newcommand{\ct} {{\mathcal T}}
	\newcommand{\ch}{{\mathcal H}}
	\newcommand{\cf}{{\mathcal F}}
	\newcommand{\cd}{{\mathcal D}}
	\newcommand{\cR}{{\mathcal R}}
	\newcommand{\cv}{{\mathcal V}}
	\newcommand{\cn}{{\mathcal N}}
	\newcommand{\lra}{\longrightarrow}
	\newcommand{\ra}{\rightarrow}
	\newcommand{\blr}{\Big \longrightarrow}
	\newcommand{\da}{\Big \downarrow}
	\newcommand{\ua}{\Big \uparrow}
	\newcommand{\hra}{\mbox{{$\hookrightarrow$}}}
	\newcommand{\rt}{\mbox{\Large{$\rightarrowtail$}}}
	\newcommand{\dua}{\begin{array}[t]{c}
			\Big\uparrow \\ [-4mm]
			\scriptscriptstyle \wedge \end{array}}
	\newcommand{\ctext}[1]{\makebox(0,0){#1}}
	\setlength{\unitlength}{0.1mm}
	\newcommand{\cm}{{\mathcal M}}
	\newcommand{\cl}{{\mathcal L}}
	\newcommand{\cp}{{\mathcal P}}
	\newcommand{\ci}{{\mathcal I}}
	\newcommand{\bz}{\mathbb{Z}}
	\newcommand{\cs}{{\mathcal s}}
	\newcommand{\ce}{{\mathcal E}}
	\newcommand{\ck}{{\mathcal K}}
	\newcommand{\cz}{{\mathcal Z}}
	\newcommand{\cg}{{\mathcal G}}
	\newcommand{\cj}{{\mathcal J}}
	\newcommand{\cc}{{\mathcal C}}
	\newcommand{\ca}{{\mathcal A}}
	\newcommand{\cb}{{\mathcal B}}
	\newcommand{\cx}{{\mathcal X}}
	\newcommand{\co}{{\mathcal O}}
	\newcommand{\bq}{\mathbb{Q}}
	\newcommand{\bt}{\mathbb{T}}
	\newcommand{\bh}{\mathbb{H}}
	\newcommand{\br}{\mathbb{R}}
	\newcommand{\bl}{\mathbf{L}}
	\newcommand{\wt}{\widetilde}
	\newcommand{\im}{{\rm Im}\,}
	\newcommand{\bc}{\mathbb{C}}
	\newcommand{\bp}{\mathbb{P}}
	\newcommand{\ba}{\mathbb{A}}
	\newcommand{\spin}{{\rm Spin}\,}
	\newcommand{\ds}{\displaystyle}
	\newcommand{\tor}{{\rm Tor}\,}
	\newcommand{\bff}{{\bf F}}
	\newcommand{\bs}{\mathbb{S}}
	\def\ns{\mathop{\lr}}
	\def\nssup{\mathop{\lr\,sup}}
	\def\nsinf{\mathop{\lr\,inf}}
	\renewcommand{\phi}{\varphi}
	\newcommand{\tT}{{\widetilde{T}}}
	\newcommand{\tG}{{\widetilde{G}}}
	\newcommand{\tB}{{\widetilde{B}}}
	\newcommand{\tC}{{\widetilde{C}}}
	\newcommand{\tW}{{\widetilde{W}}}
	\newcommand{\tphi}{{\widetilde{\Phi}}}
	\newcommand{\fp}{{\mathfrak{p}}}

	\title{$K$-theory of Flag Bott manifolds}

	\author[B. Paul]{Bidhan Paul}
	\address{Department of Mathematics, Indian Institute of Technology, Madras, Chennai 600036, India}
	\email{bidhan@smail.iitm.ac.in}

	\author[V. Uma]{Vikraman Uma}
	\address{Department of Mathematics, Indian Institute of Technology, Madras, Chennai 600036, India
	}
	\email{vuma@iitm.ac.in}

	\subjclass{55N15, 14M15, 19L99}
	
	\keywords{Flag bundles, Flag Bott manifolds, Flag Bott
		Samelson Varieties, K-theory}

	\begin{abstract}
		The aim of this paper is to describe the topological $K$-ring, in
		terms of generators and relations of a flag Bott manifold. We apply
		our results to give a presentation for the topological K-ring and
		hence the Grothendieck ring of algebraic vector bundles over flag
		Bott-Samelson varieties. 
	\end{abstract}

	\maketitle
	
	\section{Introduction}\label{flag}
	A Bott tower $B_{\bullet}=\{B_j\mid 1\leq j\leq m\}$, where
	$B_{j}=\mathbb{P}(1_{\mathbb{C}}\oplus L_j)$ for a line bundle $L_j$
	over $B_{j-1}$ for every $1\leq j$, is a sequence of
	$\mathbb{P}^1_{\mathbb{C}}$-bundles. Here $B_0=\ast$ is a point. At
	each stage $B_j$ is a $j$-dimensional nonsingular toric variety with
	an action of a dense torus $(\mathbb{C}^*)^{j}$. We call $B_j$ a
	$j$-stage Bott manifold for $1\leq j\leq m$.
	
	The topology and geometry of these varieties have been widely studied
	recently (see \cite{cms}). One main motivation to study these
	varieties comes from its relation to the Bott-Samelson variety which
	arise as desingularizations of Schubert varieties in the full flag
	variety(see \cite{bs}, \cite{d}, \cite{h}). Indeed it has been shown
	by Grossberg and Karshon (see \cite{gk}) that a Bott-Samelson variety
	admits a degeneration of complex structures with special fibre a Bott
	manifold (also see \cite{p}). Thus the topology and geometry of a Bott
	manifold gets related to those of the Bott -Samelson variety which in
	turn is equipped with rich connections with representation theory of
	semisimple Lie groups.
	
	The construction of a Bott tower can be generalized in several directions. One such natural direction is to replace $\mathbb{P}^1_{\mathbb{C}}$ with complex projective spaces of arbitrary dimensions. In other words at each stage $B_j$ is the projectivization of direct sum of finitely many complex line bundles. In this way we can construct the so called generalized Bott manifold and the generalized Bott tower which also has the structure of a nonsingular toric variety and has been studied widely (see \cite{cms1}, \cite{kl}).
	
	The construction of a Bott tower has recently been generalized in
	another natural direction of a {\it flag Bott manifold} by Kaji,
	Kuroki, Lee, Song and Suh (see \cite{kkls}, \cite{klss}, \cite{kur})
	by replacing $\mathbb{P}^1_{\mathbb{C}}$ (which can be identified with
	the variety of full flags in $\mathbb{C}^2$) at every stage with the
	variety $Flag(\mathbb{C}^n)$ of full flags in $\mathbb{C}^n$ for any
	$n\geq 2$. More precisely, at the $j$th stage we let $B_j$ to be the
	flagification of direct sum of $n_j+1$ complex line bundles over
	$B_{j-1}$. Thus $B_j$ is a flag bundle over $B_{j-1}$ with fibre the
	full flag manifold $Flag(\mathbb{C}^{n_j+1})$ of dimension
	$(n_j)(n_j+1)/2$.
	
	Moreover, in \cite{fls} the {\it flag Bott-Samelson variety} has been
	introduced and studied by Fujita, Lee and Suh. The flag Bott-Samelson
	varieties are special cases of the more general notion of the
	generalized Bott-Samelson variety which was introduced by Jantzen
	\cite{j} and studied by Perrin \cite{per} to obtain small resolutions
	of Schubert varieties. Earlier in \cite{sv}, Bott-Samelson varieties
	have been used by Sankaran and Vanchinathan to obtain resolutions of
	Schubert varieties in a partial flag variety $G/P$ by generalizing
	Zelevinsky's results who constructed small resolutions for all
	Schubert varieties in Grassmann varieties \cite{z}. The special property of the
	flag Bott-Samelson variety studied in \cite{fls} is that they are
	iterated bundles with fibres full flag varieties. In particular,
	similar to the Bott-Samelson variety the flag Bott-Samelson variety
	admits a one parameter family of complex structures which degenerates
	to a flag Bott manifold (see \cite[Section 4]{fls}). Thus the geometry
	and topology of flag Bott-Samelson variety gets related with that of
	flag Bott manifolds since the underlying differentiable structure is
	preserved under the deformation and can therefore be studied using
	this degeneration.

	There is a natural effective action of a complex torus $\mathbb{D}$
	and the corresponding compact torus $\mathbb{T}\subseteq \mathbb{D}$
	on the flag Bott manifold (see \cite{kkls}, \cite[Section 3.1]{klss}).
	
	The $\mathbb{T}$-equivariant cohomology of flag Bott manifolds has
	been studied by Kaji, Kuroki, Lee and Suh in \cite{kkls}.

	In \cite{cr} Civan and Ray gave presentations for any complex oriented
	cohomology ring (in particular the $K$-ring) of a Bott tower. They also
	determine the $KO$-ring for several families of Bott towers.
	
	In \cite{su2} P. Sankaran and the second named author describe the
	structure of the topological $K$-ring of a Bott tower, the topological
	$K$-ring of a Bott Samelson variety as well as the Grothendieck ring
	of a Bott Samelson variety.
	
	Our main aim in this paper is to study the topological $K$-theory of
	flag Bott manifolds. By applying the classical results in \cite{at}
	and \cite{k} for $K$-ring of a flag bundle iteratively, in Theorem
	\ref{mainth} we give a presentation for the $K$-ring of a flag Bott
	manifold in terms of generators and relations.
	
	In Corollary \ref{kfbs}, we apply our results to describe the
	topological $K$-ring of a flag Bott Samelson variety. This is
	essentially using the degeneration of the complex structures of a flag
	Bott Samelson variety to a flag Bott manifold.

	Also since the generating line bundles of the topological $K$-ring
	are in fact algebraic line bundles on the flag Bott-Samelson variety
	we show using \cite[Lemma 4.2]{su} that the $K$-ring of algebraic
	vector bundles is isomorphic to the topological $K$-ring. Thus
	Corollary \ref{kfbs} also gives a presentation for the Grothendieck
	ring of flag Bott-Samelson variety.

	Moreover, in Corollary \ref{kbs} we show that our presentation in
	particular generalizes the presentation of the $K$-ring of Bott
	manifolds and the $K$-ring and Grothendieck ring of Bott-Samelson
	varieties in \cite[Theorem 5.3, Theorem 5.4]{su2}.

	The flag Bott manifold has been defined for other Lie types by Kaji,
	Kuroki, Lee and Suh in \cite{kkls} and their equivariant cohomology
	ring has been computed. Thus in another direction our results are an
	extension of their results to the $K$-ring of flag Bott manifolds of
	the $A$ type.

	\subsection{Organisation of the sections}
	
	In section \ref{fbm} we recall the definition and construction of flag
	Bott manifolds from \cite{klss} and \cite{kur}. In particular, in
	subsection \ref{taut}, we recall the construction of tautological line
	bundles on these manifolds which generate its Picard group.
	
	In section \ref{fbs} we recall the definition of a flag Bott-Samelson
	variety and its associated flag Bott manifold from \cite{fls}.

	In section \ref{kth} we study the topological $K$-theory of flag Bott
	manifolds and flag Bott Samelson varieties.  In Theorem \ref{mainth}
	which is our first main result we give a presentation for the
	topological $K$-ring of a flag Bott manifold in terms of generators
	and relations.

	In subsection \ref{topk} we give the presentation for the topological
	$K$-ring and the Grothendieck ring of a flag Bott-Samelson variety as
	a corollary of Theorem \ref{mainth} (see Corollary \ref{kfbs}).

	\section{Flag Bott manifolds}\label{fbm}
	
	Let $E$ be an $n$ dimensional holomorphic vector bundle over a compact
	complex manifold $X$. We define the \textit{flag bundle} $Flag(E)$ as
	the bundle on $X$ whose fiber over each $x\in X$ is the full flag
	manifold ${Flag}(E_x)$. In particular,
	$\displaystyle Flag(E)=\bigsqcup_{x\in X}Flag(E_x)$. We recall the
	definition of flag Bott manifolds from \cite{kur, klss}. We broadly
	follow their notations and conventions.
	\begin{defe}\label{flBott}
		An $m$- stage flag Bott tower $B_\bullet=\{B_j|0\leq j\leq m\}$ is a sequence of manifolds
		$$ B_m\xrightarrow{p_m} B_{m-1}\xrightarrow{p_{m-1}}.\,.\,.\,. \xrightarrow{p_2} B_1 \xrightarrow{p_1}B_0 = \{\ast\}$$ which is defined recursively as follows :\\
		\begin{enumerate}
			\item $B_0$ is a point.
			\item $\displaystyle B_j:={Flag}(\bigoplus_{l=1}^{n_j+1}\eta^{(j)}_l)$, where $\eta^{(j)}_l$ is a holomorphic line bundle  over $B_{j-1}$ for each $1\leq l\leq n_j+1$ and $1\leq j\leq m$.
		\end{enumerate} We call $B_j$ as $j-$stage flag Bott manifold of
		the flag Bott tower $B_\bullet$.
	\end{defe}
	Two flag Bott towers $B_\bullet=\{B_j\,|\,0\leq j\leq m\}$ and $B'_\bullet=\{B'_j\,|\,0\leq j\leq m_1\}$ are said to be isomorphic if $m=m_1$ and 
	$$\begin{tikzcd}
		B_j \arrow{r}{\phi_j} \arrow[swap]{d}{p_j} & B'_j \arrow{d}{p'_j} \\
		B_{j-1} \arrow{r}{\phi_{j-1}}& B'_{j-1}
	\end{tikzcd}$$ there exist a collection of diffeomorphisms $\phi_j:B_j\to B'_j$ for each $1\leq j\leq m$ such that 
	the above diagram commutes for each $1\leq j\leq m$.
	
	First we give some trivial examples of flag Bott manifolds. Later we
	shall give a non-trivial example.
	\begin{exam}
		\begin{enumerate}
			\item The flag manifold ${Flag}(n+1):={Flag}(\bc^{n+1})$ is a 1-stage flag Bott tower.
			\item The usual product of flag manifolds ${Flag}(n_1+1)\times {Flag}(n_2+1)\times \cdots\times {Flag}(n_l+1)$ is an $l$- stage flag Bott manifold.
		\end{enumerate}
	\end{exam}
	
	We now define the $j$-stage flag Bott manifold for each $1\leq j\leq m$  as an orbit space under certain  right action 
	$$B_j^{quo}:= \prod_{l=1}^{j} GL(n_l+1)/\prod_{l=1}^{j} B_{GL(n_l+1)}$$ where $B_{GL(n_l+1)}$ denotes the Borel subgroup consisting of upper triangular matrices of $GL(n_l+1)$ for $l=1,2,\ldots,j$.\\
	$B_1^{quo}$ is the flag manifold ${Flag}(n_1+1)= GL(n_1+1)/B_{GL(n_1+1)}$. In order to define the flag Bott manifolds of higher stages we need a sequence of matrices with integer entries
	\begin{equation}
		\mathfrak{P}:=(P_l^{(j)})_{1\leq l<j\leq m}\in \prod_{1\leq l<j\leq m} M_{n_j+1,n_l+1}(\bz).
	\end{equation}
	Let $D(n_i+1)\subset GL(n_i+1)$ denotes the collection of diagonal matrices in $GL(n_i+1)$ for each $i=1,2,\ldots,m$. Each $P_l^{(j)}$ for $1\leq l<j\leq m$ encodes a $B_{GL(n_l+1)}$ action on $GL(n_j+1)$ as follows. 
	
	Let $$P_l^{(j)}=\begin{bmatrix}
		\bf{a}_1\\ \bf{a}_2\\ .\\ .\\.\\ \bf{a}_{n_j+1}
	\end{bmatrix}=\begin{bmatrix}
		a_{11}& a_{12}&...& a_{1,n_l+1}\\ a_{21}& a_{22}&...& a_{2,n_l+1}\\.&.&...&.\\ .&.&...&.\\.&.&...&.\\ a_{n_j+1,1}& a_{n_j+1,2}&...& a_{n_j+1,n_l+1}
	\end{bmatrix}\in M_{n_j+1,n_l+1}(\bz) $$
	
	Since the character group $X^*(D(n_l+1))$ is isomorphic to $\bz^{n_l+1}$, we define a group homomorphism $\pi_l^{(j)}: D(n_l+1)\to D(n_j+1)$ using the matrix $P_l^{(j)}$ i.e 
	\begin{equation}
		\pi_l^{(j)}:  h \to diag(h^{\bf{a_1}}, h^{\bf{a_2}},\ldots,h^{\bf{a_{n_j+1}}})\in D(n_j+1) 
	\end{equation}
	where, $ h^{\bf{a_i}}=h_1^{a_{i1}}\cdot h_2^{a_{i2}}\cdots h_{n_l+1}^{a_{in_l+1}}$ for $h :=diag(h_1,h_2,\ldots,h_{n_l+1})\in D(n_l+1)$ and ${\bf{a_i}}=(a_{i1},a_{i2},\ldots,a_{in_l+1})$ is the $i$-th row vector of $P_l^{(j)}$ for each $1\leq i\leq n_j+1$.
	
	We now define the homomorphism $\Psi_l^{(j)}:=\Psi(P_l^{(j)}) :B_{GL(n_l+1)}\to D(n_j+1)$ by
	\begin{equation}
		\Psi_l^{(j)}= \pi_l^{(j)}\circ\Gamma_l
	\end{equation}
	i.e for each $b\in B_{GL(n_l+1)}$, 
	$$\Psi_l^{(j)}(b)= diag(\Gamma_l(b)^{\bf{a_1}},\Gamma_l(b)^{\bf{a_2}},\ldots,\Gamma_l(b)^{\bf{a_{n_j+1}}})\in D(n_j+1) $$
	where $\Gamma_l : B_{GL(n_l+1)}\to D(n_l+1)$ is the canonical projection for each $1\leq l\leq m$.
	
	Now we define a right $\displaystyle\prod_{l=1}^{j} B_{GL(n_l+1)}$ -action 
	\begin{equation}\label{ract}
		\Phi_j^{\mathfrak{P}} : \prod_{l=1}^{j} GL(n_l+1)\times\prod_{l=1}^{j} B_{GL(n_l+1)} \longrightarrow \prod_{l=1}^{j} GL(n_l+1)\end{equation} by $\Phi_j^{\mathfrak{P}}((g_1,g_2,\ldots,g_j),(b_1,b_2,\ldots,b_j))
	:=$
	\begin{align*}
		\big(g_1b_1, {(\Psi_1^{(2)}(b_1))}^{-1}g_2b_2,
		{(\Psi_1^{(3)}(b_1))}^{-1}{(\Psi_2^{(3)}(b_2))}^{-1}g_3b_3,\\
		\ldots,{(\Psi_1^{(j)}(b_1))}^{-1}{(\Psi_2^{(j)}(b_2))}^{-1}\cdots {(\Psi_{j-1}^{(j)}(b_{j-1}))}^{-1}g_jb_j\big)
	\end{align*}
	\begin{lema}(\cite[Lemma 2.6]{klss})\label{frpr}
		$\Phi_j^{\mathfrak{P}}$ in \eqref{ract} is a free and proper right
		action for $1\leq j\leq m$.
	\end{lema}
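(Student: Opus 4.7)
The plan is to verify freeness by a direct induction on the coordinate index, and then derive properness by essentially inverting the orbit map.

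For freeness, set $\tG := \prod_{l=1}^{j} GL(n_l+1)$ and $\tB := \prod_{l=1}^{j} B_{GL(n_l+1)}$, and suppose $(b_1,\ldots,b_j)\in \tB$ fixes some $(g_1,\ldots,g_j)\in \tG$ under $\Phi_j^{\mathfrak{P}}$. The first coordinate of the fixed-point equation reads $g_1 b_1 = g_1$, so $b_1 = e$. Since each $\Psi_l^{(i)}$ is a group homomorphism, $\Psi_1^{(2)}(e) = e$, and so the second coordinate reduces to $g_2 b_2 = g_2$, forcing $b_2 = e$. An obvious induction on $i$ then yields $b_i = e$ for every $1 \leq i \leq j$.

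For properness I will show that the orbit map
$$\mu : \tG \times \tB \longrightarrow \tG \times \tG, \qquad \mu(x,b) := \big(x,\, \Phi_j^{\mathfrak{P}}(x,b)\big),$$
is proper. Writing $x = (g_1,\ldots,g_j)$ and $x' = (g_1',\ldots,g_j') := \Phi_j^{\mathfrak{P}}(x,b)$, inverting the defining formula coordinate by coordinate (and using that the $\Psi_l^{(i)}$ take values in the abelian subgroup $D(n_i+1)$) gives the staircase recursion
$$b_1 = g_1^{-1} g_1', \qquad b_i = g_i^{-1} \Big(\prod_{l=1}^{i-1} \Psi_l^{(i)}(b_l)\Big) g_i' \quad (2 \leq i \leq j),$$
which realises $b$ as a smooth (in fact rational) function of $(x,x')$. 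Given a compact $K \subset \tG \times \tG$, its projections to each $\tG$-factor are compact; the recursion then confines each $b_i$ to a compact subset of $GL(n_i+1)$, and intersecting with the closed subgroup $B_{GL(n_i+1)}$ preserves compactness. Hence $\mu^{-1}(K)$ is a closed subset of a compact subset of $\tG \times \tB$, so is itself compact, and properness follows.

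The main obstacle, modest as it is, is that the homomorphisms $\Psi_l^{(i)}$ are defined only on the Borel subgroups, so the staircase recursion is a priori ambiguous as a map on all of $\tG \times \tG$. On the image of $\mu$ this ambiguity is resolved by freeness, as the $b_i$ reconstructed by the recursion automatically lie in $B_{GL(n_i+1)}$; the closedness of each Borel inside the corresponding general linear group is then the key topological fact powering the compactness argument.
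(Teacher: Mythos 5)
Your proof is correct, but note that the paper itself offers no proof of this lemma to compare against: it is quoted directly from \cite[Lemma 2.6]{klss}, so your write-up supplies an argument where the paper only supplies a citation. On its own merits the argument is sound and is the natural one. Freeness via the triangular structure is fine: $g_1b_1=g_1$ forces $b_1=e$, then $\Psi_1^{(2)}(e)=e$ collapses the second coordinate to $g_2b_2=g_2$, and induction finishes. For properness, your reduction to properness of $\mu(x,b)=(x,\Phi_j^{\mathfrak{P}}(x,b))$ is the standard definition, and the staircase inversion $b_1=g_1^{-1}g_1'$, $b_i=g_i^{-1}\bigl(\prod_{l=1}^{i-1}\Psi_l^{(i)}(b_l)\bigr)g_i'$ (valid because the $\Psi_l^{(i)}$ land in the abelian group $D(n_i+1)$) does confine each $b_i$ to a compact subset of $GL(n_i+1)$ once the projections of $K$ are compact; closedness of each Borel subgroup and continuity of $\mu$ then give that $\mu^{-1}(K)$ is a closed subset of a compact set, hence compact. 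One small simplification: your closing worry about the recursion being ill-defined on all of $\tG\times\tG$, and its resolution via freeness, is not actually needed. In the properness argument you only ever evaluate the recursion at points $(x,b)\in\mu^{-1}(K)\subseteq\tG\times\tB$, where the $b_l$ are honest Borel elements, so $\Psi_l^{(i)}(b_l)$ is always defined; there is no ambiguity to resolve, and freeness plays no role in properness.
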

	Hence by Lemma \ref{frpr}, the orbit space \[B_j^{quo}(\mathfrak{P}):=\prod_{l=1}^{j} GL(n_l+1)/\Phi_j^{\mathfrak{P}} \] is a complex manifold as $\displaystyle\prod_{l=1}^{j} B_{GL(n_l+1)}$ acts freely and properly on the complex manifold $\displaystyle\prod_{l=1}^{j} GL(n_l+1)$ (see \cite[Proposition 2.1.13]{huy}) and $B^{quo}_\bullet(\mathfrak{P}) :=\{B_j^{quo}(\mathfrak{P})\,|\, 0\leq j\leq m\} $ is  a flag Bott tower of height $m$ (see \cite[Proposition 2.7]{klss}).
	
	We write an element of $B_j^{quo}(\mathfrak{P})$ as $[g_1,g_2,\ldots, g_j]$ which is the orbit of $\displaystyle (g_1,g_2,\ldots, g_j)\in \prod_{l=1}^{j} GL(n_l+1)$ under $\displaystyle\prod_{l=1}^{j} B_{GL(n_l+1)}$ action and $p_j: B_j^{quo}\to B_{j-1}^{quo}$ is defined by $$[g_1,g_2,\ldots, g_{j-1},\, g_j]\mapsto [g_1,g_2,\ldots, g_{j-1}].$$
	
	Since the character group
	$\displaystyle X^*(\prod_{l=1}^{j}{D(n_l+1)})\simeq
	\bigoplus_{l=1}^{j}\bz^{n_l+1}$, we define a holomorphic line bundle
	on $B_j^{quo}$ for each integer vector
	$\displaystyle ({\bf{v}}_1,{\bf{v}}_2,\ldots,{\bf{v}}_j)\in
	\bigoplus_{l=1}^{j}\bz^{n_l+1}$ as an orbit space :
	\begin{equation}\label{line}
		\eta({\bf{v}}_1,{\bf{v}}_2,\ldots,{\bf{v}}_j):=\bigg(\prod_{l=1}^{j} GL(n_l+1)\times \mathbb{C}\bigg)/\prod_{l=1}^{j} B_{GL(n_l+1)}
	\end{equation}

	where we have the following right action
	\[(g_1,g_2,\ldots,
	g_j,w).(b_1,b_2,\ldots,b_j)\\:=(\Phi_j^{\mathfrak{P}}((g_1,g_2,\ldots,
	g_j).(b_1,b_2,\ldots, b_j)),b_1^{-{\bf{v}}_1}\cdots
	b_j^{-{\bf{v}}_j}w).\] Here  \[b_l^{-{\bf v}_l}:=\Gamma_l(b_l)^{-{\bf
			v}_l}\] for $1\leq l\leq j$ and \[h^{\bf v}:= h_1^{v_1}\cdots
	h_{n_l+1}^{v_{n_l+1}}\] for $h\in D(n_l+1)$ and ${\bf v}=(v_{1},\ldots,
	v_{n_l+1})\in \mathbb{Z}^{n_l+1}$.

	\begin{propo}\label{prop}(see \cite[proof of Prop.2.7]{klss})
		For a collection of matrices $$\mathfrak{P}:=(P_l^{(j)})_{1\leq l<j\leq m}\in \prod_{1\leq l<j\leq m}M_{n_j+1,n_l+1}(\bz)$$ the $j$-stage flag Bott manifold $B_j^{quo}$ of the flag Bott tower $B_\bullet^{quo}(\mathfrak{P})$ is the induced flag bundle $Flag(\eta^{(j)})$ over $B_{j-1}^{quo}$, where $$\eta^{(j)}:=\bigoplus_{k=1}^{n_j+1}\eta({\bf{v}}^{(j)}_{k,1},{\bf{v}}^{(j)}_{k,2},\ldots,{\bf{v}}^{(j)}_{k,j-1})$$ and ${\bf{v}}^{(j)}_{k,l}$ is the $k$-th row vector of the matrix $P^{(j)}_l$ for each $1\leq l\leq j-1$.
	\end{propo}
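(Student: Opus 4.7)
The plan is to realize $B_j^{quo}(\mathfrak{P})$ as a two-stage quotient of $\prod_{l=1}^{j} GL(n_l+1)$ and match it with the standard description of $Flag(\eta^{(j)})$ obtained by trivializing $\eta^{(j)}$ on a principal bundle cover of $B_{j-1}^{quo}$.

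First I would split the structure group $\prod_{l=1}^{j} B_{GL(n_l+1)}$ as $\prod_{l=1}^{j-1} B_{GL(n_l+1)} \times B_{GL(n_j+1)}$ and form the quotient in two stages. Setting $b_l=1$ for $l<j$, the last factor $B_{GL(n_j+1)}$ in \eqref{ract} acts trivially on the first $j-1$ coordinates and by $g_j \mapsto g_j b_j$ on the last, so the intermediate quotient is
\[ \prod_{l=1}^{j-1} GL(n_l+1)\times GL(n_j+1)/B_{GL(n_j+1)} \;=\; \prod_{l=1}^{j-1} GL(n_l+1)\times Flag(n_j+1). \]
Inspection of \eqref{ract} then shows that the residual right action of $\prod_{l=1}^{j-1} B_{GL(n_l+1)}$ equals $\Phi_{j-1}^{\mathfrak{P}}$ on the first $j-1$ coordinates together with left multiplication $[g_j]\mapsto [\Psi_1^{(j)}(b_1)^{-1}\cdots\Psi_{j-1}^{(j)}(b_{j-1})^{-1} g_j]$ on the last. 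Since $\Phi_{j-1}^{\mathfrak{P}}$ is free and proper by Lemma \ref{frpr}, this exhibits $B_j^{quo}$ as a fiber bundle over $B_{j-1}^{quo}$ with typical fiber $Flag(n_j+1)$.

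Next I would match this presentation with the associated bundle description of $Flag(\eta^{(j)})$. Pulling $\eta^{(j)}$ back along the principal bundle $\prod_{l=1}^{j-1} GL(n_l+1)\to B_{j-1}^{quo}$ and using \eqref{line}, each summand $\eta({\bf v}^{(j)}_{k,1},\ldots,{\bf v}^{(j)}_{k,j-1})$ trivializes with $(b_1,\ldots,b_{j-1})$ acting on the fiber by the character $b_1^{-{\bf v}^{(j)}_{k,1}}\cdots b_{j-1}^{-{\bf v}^{(j)}_{k,j-1}}$. Summing the $n_j+1$ summands, $\eta^{(j)}$ pulls back to the trivial rank $n_j+1$ bundle on which $(b_1,\ldots,b_{j-1})$ acts fiberwise as the diagonal matrix $\prod_{l=1}^{j-1}\Psi_l^{(j)}(b_l)^{-1}$, by the very definition of $\Psi_l^{(j)}$ via the rows of $P_l^{(j)}$. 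Since taking flag bundles commutes with pullback along a principal bundle, $Flag(\eta^{(j)})$ is the quotient of $\prod_{l=1}^{j-1} GL(n_l+1)\times Flag(n_j+1)$ by exactly the residual action identified above, matching $B_j^{quo}$ term for term.

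The main bookkeeping obstacle is verifying that the cocycle twisting the $g_j$-coordinate in $\Phi_j^{\mathfrak{P}}$ coincides with the cocycle defining $\eta^{(j)}$. This reduces to the definition $\Psi_l^{(j)}=\pi_l^{(j)}\circ\Gamma_l$ together with the identification $X^*(D(n_l+1))\simeq \bz^{n_l+1}$, which ensure that the $k$th diagonal entry of $\Psi_l^{(j)}(b_l)^{-1}$ is exactly the character $b_l^{-{\bf v}^{(j)}_{k,l}}$ appearing in \eqref{line}. Once this character computation is in place, the two quotient descriptions coincide and the proposition follows.
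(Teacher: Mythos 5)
Your proposal is correct, but it takes a different route from the paper. The paper argues by writing down an explicit map $\phi_j:[g_1,\ldots,g_j]\mapsto([g_1,\ldots,g_{j-1}],\underline{V})$, where $V_k$ is the span of the first $k$ columns of $g_j$ in the fiber $\eta^{(j)}_{[g_1,\ldots,g_{j-1}]}$, then checks well-definedness of $\phi_j$ and of its explicit inverse $f_j$ directly on orbit representatives; the only structural input is the observation (Remark \ref{dslbs}) that the twisting factor $\Psi_1^{(j)}(b_1)^{-1}\cdots\Psi_{j-1}^{(j)}(b_{j-1})^{-1}$ lies in the structure group $D(n_j+1)$ of $\eta^{(j)}$ and hence acts trivially on the identification of fibers. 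You instead perform the quotient in two stages, identify $B_j^{quo}$ as the associated bundle $\prod_{l=1}^{j-1}GL(n_l+1)\times_{\prod_{l=1}^{j-1}B_{GL(n_l+1)}}Flag(n_j+1)$ with the structure group acting on the fiber through $\prod_{l=1}^{j-1}\Psi_l^{(j)}(b_l)^{-1}$, and match this with $Flag(\eta^{(j)})$ via the fact that flag-bundle formation commutes with the associated-bundle construction; the only computation left is that the $k$-th diagonal entry of $\Psi_l^{(j)}(b_l)^{-1}$ is the character $b_l^{-{\bf v}^{(j)}_{k,l}}$ from \eqref{line}, which is exactly the same cocycle identity the paper uses in Remark \ref{dslbs}. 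Your approach buys a cleaner structural picture: the fiber-bundle structure, local triviality, and the role of Lemma \ref{frpr} are manifest, and no separate well-definedness check for a map and its inverse is needed; the cost is reliance on two standard but unproved-in-the-paper facts (that a quotient by a direct product can be taken in stages with the residual action descending, and that $Flag(P\times_G V)\cong P\times_G Flag(V)$), whereas the paper's pointwise argument is self-contained, at the price of more bookkeeping. One small phrasing caveat: what you need is commutation of $Flag(-)$ with descent along the principal bundle (i.e., with the associated-bundle construction), not merely with pullback; your final sentence states the correct version, so this is only a matter of wording.
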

	
	\brem\label{dslbs}
	Note that $\eta^{(j)}$ is a direct sum of $n_j+1$ complex line bundles
	$\eta^{(j)}_l$ for $1\leq l\leq n_j+1$ on $B_{j-1}^{quo}$. Thus the
	structure group of $\eta^{(j)}$ is the complex torus
	$D(n_j+1)$. In particular this implies that for $\bf v$ in the fiber
	$(\eta^{(j)})_{[g_1,\ldots, g_{j-1}]}$ of $\eta^{(j)}$ over the point
	$[g_1,\ldots, g_{j-1}]\in B_{j-1}^{quo}$ we have the identification
	\[\Psi_1^{(j)}(b_1)^{-1}\cdots
	\Psi_{j-1}^{(j)}(b_{j-1})^{-1}\cdot {\bf v}\sim {\bf v}\] since
	$\Psi_l^{(j)}(b_l)\in D(n_j+1)$ for every $1\leq l\leq j-1$.
	\erem

	{\it Proof of Proposition \ref{prop}.}  Consider the map
	$\phi_j :B_j^{quo}\to Flag(\eta^{(j)})$ defined
	by
	$$[g_1,\,g_2,\ldots,g_{j-1},\,g_j]\mapsto ([g_1,\,g_2,\ldots,g_{j-1}],
	\,\underline{V})$$ where
	$\underline{V}=(V_1\subsetneq V_2\subsetneq \cdots\subsetneq V_{n_j}
	\subsetneq ( \eta^{(j)}_{[g_1, g_2,\ldots,g_{j-1}]})$ is the full flag
	of $\eta^{(j)}_{[g_1, g_2,\ldots,g_{j-1}]}$ such that $V_k$ is spanned
	by first $k$ columns of $g_j\in GL(n_j+1)$. Note that
	\begin{equation*}
		\begin{split}
			[\Phi_j^{\mathfrak{P}}((g_1,\ldots,g_j),(b_1,\ldots,b_j))]
			\mapsto &([\Phi_{j-1}^{\mathfrak{P}}((g_1,\ldots,g_{j-1}),(b_1,\ldots,b_{j-1}))]\,,\underline{V'}) \\
			&= ([g_1,g_2,\ldots,g_{j-1}], \underline{V'})
		\end{split}
	\end{equation*}
	for
	$\displaystyle (b_1,b_2,\ldots,b_j)\in \prod_{l=1}^jB_{GL(n_j+1)}$.
	Here
	\[\underline{V'}=(V'_1\subsetneq V'_2\subsetneq \cdots \subsetneq
	V'_{n_j} \subsetneq ( \eta^{(j)}_{[g_1, g_2,\ldots,g_{j-1}]})\] is
	the full flag of $\eta^{(j)}_{[g_1, g_2,\ldots,g_{j-1}]}$ such that
	$V'_k$ is spanned by the first $k$ columns of
	${(\Psi_1^{(j)}(b_1))}^{-1}{(\Psi_2^{(j)}(b_2))}^{-1}\cdots
	{(\Psi_{j-1}^{(j)}(b_{j-1}))}^{-1}g_j\cdot b_j\,\in GL(n_j+1)$. Since
	$b_j\in B_{GL(n_j+1)}$, the vector space spanned by the first $k$
	column vectors of $g_j\cdot b_j$ is $V_k$ for $1\leq k\leq
	n_j+1$. Now, the column vectors of $g_j$ span the fibre
	$(\eta^{(j)})_{[g_1,\ldots, g_{j-1}]}$. Thus by Remark \ref{dslbs} it
	follows that we can identify any column vector $\bf v$ of $g_j$ with
	$\Psi_1^{(j)}(b_1)^{-1}\cdots \Psi_{j-1}^{(j)}(b_{j-1})^{-1}\cdot {\bf
		v}$ in $(\eta^{(j)})_{[g_1,\ldots, g_{j-1}]}$. Thus the flags
	$\underline{V'}$ and $\underline{V}$ in
	$(\eta^{(j)})_{[g_1,\ldots, g_{j-1}]}$ can be identified. It follows
	that $([g_1,g_2,\ldots,g_{j-1}], \underline{V'})$ and
	$([g_1,g_2,\ldots,g_{j-1}], \underline{V})$ can be identified as
	elements of $Flag(\eta^{(j)})$. Hence $\phi_j$ is well defined.

	Let $f_j: Flag(\eta^{(j)})\to B_j^{quo}$ be the map defined by
	$$([g_1, g_2,\ldots,g_{j-1}],\underline{V})\mapsto [g_1,
	g_2,\ldots,g_{j-1}, g_j]$$  where
	\[\underline{V}=(V_1\subsetneq V_2\subsetneq ...\subsetneq V_{n_j}
	\subsetneq ( \eta^{(j)}_{[g_1, g_2,\ldots,g_{j-1}]})\] is the full
	flag of $\eta^{(j)}_{[g_1, g_2,\,.\,.\,.\,,g_{j-1}]}$ and
	$g_j\in GL(n_j+1)$ is the matrix such that the first $k$ columns span
	the vector space $V_k$ for $1\leq k\leq n_j+1$. Then $f_j$ is the
	inverse of $\phi_j$. It suffices therefore to show that $f_j$ is well
	defined. This follows since the element
	$([\Phi_{j-1}^{\mathfrak{P}}((g_1,\ldots,g_{j-1}),(b_1,\ldots,b_{j-1}))],
	\underline{V})$ maps to
	$[\Phi_{j}^{\mathfrak{P}}((g_1,\ldots,g_{j}),(b_1,\ldots,b_{j}))]=[g_1,\ldots,g_{j}]$.
	This again follows because the span of the first $k$ column vectors of
	$g_j$ can be identified with the span of the first $k$ column vectors
	of
	\[\Psi_1^{(j)}(b_1)^{-1}\cdots \Psi_{j-1}^{(j)}(b_{j-1})^{-1} \cdot
	g_j\cdot b_j.\] Hence the map $\phi_j$ is a
	diffeomorphism. Therefore the proposition follows. \hfill $\Box$

	\begin{exam}\label{ex}
		For $n_1=2,\, n_2=1,\,$ and $n_3=1$, let 
		$$P^{(2)}_1=\begin{bmatrix}a_1 & a_2 & a_3\\ b_1 & b_2 & b_3
		\end{bmatrix};
		P^{(3)}_1=\begin{bmatrix}c_1 & c_2 & c_3\\ d_1 & d_2 & d_3
		\end{bmatrix}; P^{(3)}_2=\begin{bmatrix}f_1 & f_2\\ 0 & 0
		\end{bmatrix} $$ be the collection of matrices which determines the
		right action $\Phi_j^{\mathfrak{P}}$ of
		$\prod_{l=1}^{j} B_{GL(n_l+1)}$ on $\prod_{l=1}^{j} GL(n_l+1)$ as in
		\eqref{ract} for $j=1,2,3$. By Proposition \ref{prop},
		$B_\bullet^{quo}(\mathfrak{P})$ is isomorphic to the $3$ stage flag
		Bott tower
		\[B_3\xrightarrow{p_3} B_2\xrightarrow{p_2} B_1\xrightarrow{p_1}
		B_0=\{a\,\,point\}\] where the corresponding flag Bott manifolds are:\\
		$B_1=Flag(3)$\\
		$B_2=Flag(\eta((a_1,\,a_2,\,a_3))\oplus\eta((b_1,\,b_2,\,b_3)))$\\
		$B_3=Flag(\eta((c_1,\,c_2,\,c_3),(f_1,\,f_2))\oplus\eta((d_1,\,d_2,\,d_3),(0,\,0)))$.
		
		\noindent
		The line bundle $\eta((c_1,\,c_2,\,c_3),(f_1,\,f_2))$ over $B_2$ is
		\[(GL(3)\times GL(2)\times \bc)/(B_{GL(3)}\times B_{GL(2)})\] where the
		right action of $(B_{GL(3)}\times B_{GL(2)})$ is given by
		$$(g_1,g_2,w)\cdot(b_1,b_2):=\bigg(\Phi^{\mathfrak{P}}_2((g_1,g_2)\cdot(b_1,b_2)),\,\,
		b_1^{-(c_1,\,c_2,\,c_3)}b_2^{-(f_1,f_2)}w\bigg)$$ (see \eqref{line}). 
	\end{exam}

	\begin{guess}(\cite[Theorem 2.10]{klss})
		For any flag Bott tower $B_\bullet$ of height $m$, there is a sequence
		of matrices $$\mathfrak{P}:=(P_l^{(j)})_{1\leq l<j\leq m}\in
		\prod_{1\leq l<j\leq m} M_{n_j+1,n_l+1}(\bz)$$ such that
		$B_\bullet^{quo}(\mathfrak{P})$ is isomorphic to $B_\bullet$ as flag
		Bott towers. 
	\end{guess}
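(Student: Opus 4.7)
The strategy is to proceed by induction on the height $m$ of the given flag Bott tower $B_\bullet$. The base case $m=0$ is trivial: the collection $\mathfrak{P}$ is empty and $B_0^{quo}=\{\ast\}=B_0$. For $m=1$, every line bundle $\eta_l^{(1)}$ over the point $B_0$ is trivial, so $B_1=Flag(\bc^{n_1+1})=GL(n_1+1)/B_{GL(n_1+1)}=B_1^{quo}$, again with $\mathfrak{P}$ empty.

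For the inductive step, suppose I have already produced a collection $\mathfrak{P}'=(P_l^{(k)})_{1\leq l<k\leq j-1}$ together with a compatible family of diffeomorphisms giving an isomorphism of flag Bott towers $\phi_{\bullet,j-1}:B_\bullet^{quo}(\mathfrak{P}')\to B_\bullet$ through stage $j-1$. By definition $B_j=Flag(\bigoplus_{l=1}^{n_j+1}\eta_l^{(j)})$ for holomorphic line bundles $\eta_l^{(j)}$ over $B_{j-1}$. Pulling back along $\phi_{j-1}$, I view each $\eta_l^{(j)}$ as a holomorphic line bundle on $B_{j-1}^{quo}(\mathfrak{P}')$.

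The heart of the argument is the following claim: every holomorphic line bundle on $B_{j-1}^{quo}(\mathfrak{P}')$ is isomorphic to one of the form $\eta(\mathbf{v}_1,\ldots,\mathbf{v}_{j-1})$ constructed in \eqref{line}, for some $\mathbf{v}_k\in\bz^{n_k+1}$. Granting this, for each $l$ I choose vectors $\mathbf{v}_{l,1}^{(j)},\ldots,\mathbf{v}_{l,j-1}^{(j)}$ with $\phi_{j-1}^*\eta_l^{(j)}\cong\eta(\mathbf{v}_{l,1}^{(j)},\ldots,\mathbf{v}_{l,j-1}^{(j)})$, and define $P_k^{(j)}$ to be the $(n_j+1)\times(n_k+1)$ integer matrix whose $l$-th row is $\mathbf{v}_{l,k}^{(j)}$. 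Setting $\mathfrak{P}:=\mathfrak{P}'\cup\{P_k^{(j)}:1\leq k<j\}$, Proposition \ref{prop} identifies $B_j^{quo}(\mathfrak{P})$ with the flag bundle over $B_{j-1}^{quo}(\mathfrak{P}')$ of $\bigoplus_l\eta(\mathbf{v}_{l,1}^{(j)},\ldots,\mathbf{v}_{l,j-1}^{(j)})\cong\phi_{j-1}^*(\bigoplus_l\eta_l^{(j)})$. Because the formation of the flag bundle commutes with pullback, $\phi_{j-1}$ lifts canonically to an isomorphism $\phi_j:B_j^{quo}(\mathfrak{P})\to B_j$ covering $\phi_{j-1}$, extending the tower isomorphism by one stage.

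The main obstacle is the key claim, and this is where the iterative description from Proposition \ref{prop} is essential. Since $B_{j-1}^{quo}(\mathfrak{P}')$ is an iterated holomorphic full flag bundle, an inductive application of the standard Picard computation $\mathrm{Pic}(Flag(E))=\mathrm{Pic}(X)\oplus\bz^{r}$ for a rank $r+1$ holomorphic bundle $E\to X$ shows $\mathrm{Pic}(B_{j-1}^{quo}(\mathfrak{P}'))\cong\bigoplus_{k=1}^{j-1}\bz^{n_k}$, with generators the successive tautological line subquotients along each stage. One must then verify that these tautological generators are themselves realized as $\eta(0,\ldots,0,\mathbf{v}_k,0,\ldots,0)$ with $\mathbf{v}_k$ a standard basis vector of $\bz^{n_k+1}$: this is a direct cocycle computation from the orbit description \eqref{line}, using Remark \ref{dslbs} to see that the transition functions of any $\eta(\mathbf{v}_1,\ldots,\mathbf{v}_{j-1})$ are determined by the diagonal parts $\Gamma_k(b_k)$ of the $B_{GL(n_k+1)}$-action. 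Surjectivity of the map $\bigoplus_k\bz^{n_k+1}\to\mathrm{Pic}(B_{j-1}^{quo}(\mathfrak{P}'))$ follows, giving the claim and completing the induction.
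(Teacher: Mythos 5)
You should note at the outset that the paper does not prove this statement at all: it is quoted verbatim from \cite[Theorem 2.10]{klss}, so there is no internal proof to compare against. Judged on its own, your inductive argument is essentially the standard one (and is in the spirit of the proof in \cite{klss}), and it is sound. The base cases are fine, and the inductive step correctly reduces everything to the key claim that every line bundle on the $(j-1)$-stage quotient model is isomorphic to some $\eta(\mathbf{v}_1,\ldots,\mathbf{v}_{j-1})$; the ingredients for that claim are exactly what the paper assembles elsewhere, namely the proposition that $Pic(B_{j-1})$ is generated by the tautological subquotients and their pullbacks (the paper's proposition citing \cite[Lemma 2.11]{klss}), the identification of those generators with $\eta(\mathbf{0},\ldots,\mathbf{e}_k,\ldots,\mathbf{0})$ (Lemma \ref{line2} and Corollary \ref{cor}), and additivity of $\eta$ under tensor product (Lemma \ref{tensor}). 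Two small points of care. First, since an isomorphism of flag Bott towers is only a compatible family of diffeomorphisms, the pullback $\phi_{j-1}^*\eta_l^{(j)}$ is a priori just a smooth complex line bundle, not a holomorphic one; this does no harm, because smooth complex line bundles are classified by $c_1$, and surjectivity of $(\mathbf{v}_1,\ldots,\mathbf{v}_{j-1})\mapsto c_1(\eta(\mathbf{v}_1,\ldots,\mathbf{v}_{j-1}))$ onto $H^2$ is all you need (alternatively, $c_1:Pic\to H^2$ is an isomorphism for these manifolds, as the paper notes, so the holomorphic phrasing is also correct). Second, when invoking $\mathrm{Pic}(Flag(E))\cong\mathrm{Pic}(X)\oplus\bz^{r}$ you should make explicit that the $n_k+1$ subquotients at each stage generate the relative factor only together with the base, via the determinant relation $\mathcal{L}_{k,1}\otimes\cdots\otimes\mathcal{L}_{k,n_k+1}\cong p_k^*\det\eta^{(k)}$; this is consistent with your surjectivity statement but should be said, since the map $\bigoplus_k\bz^{n_k+1}\to Pic$ is surjective with nontrivial kernel rather than an isomorphism.
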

	
	\begin{defe}\label{defquo}
		A flag Bott tower $B_\bullet$ is said to be determined by the
		collection of matrices
		$\mathfrak{P}:=(P_l^{(j)})_{1\leq l<j\leq m}\in \prod_{1\leq l<j\leq
			m}M_{n_j+1,n_l+1}(\mathbb{Z})$ if $B_\bullet$ is isomorphic to
		$B_\bullet^{quo}(\mathfrak{P})$ as flag Bott towers.
	\end{defe}

	\subsection{Tautological line bundles over a flag Bott manifold} \label{taut}
	
	We recall from Definition \ref{flBott}, that any point of
	$B_j=Flag(\eta^{(j)})$, where we write $\displaystyle \eta^{(j)}:=\bigoplus_{l=1}^{n_j+1}\eta^{(j)}_l$, can be interpreted as $$(p,\underline{V})=\bigg(p,\,V_0\subset V_1\subset V_2\subset \cdots\subset V_{n_j}\subset V_{n_j+1}=\eta^{(j)}_p\bigg)$$ for $p\in B_{j-1}$. For each $1\leq k\leq n_j+1$, we define a sub bundle $W_{j,k}\subseteq p_j^*(\eta^{(j)})$  over $B_j$ which has fiber $V_k$ of the flag $\underline{V}$ over a point $(p,\underline{V})\in B_j$. Hence we have the quotient line bundle $W_{j,k}/W_{j,k-1}$ over $B_j$ for each $1\leq k\leq n_j+1$. \\

	\begin{lema}\label{line2}(see \cite[Lemma 2.12]{klss})
		Let $B^{quo}_\bullet:= B^{quo}_\bullet(\mathfrak{P})$ be a flag Bott
		tower for a sequence of matrices
		$\displaystyle\mathfrak{P}:=(P_l^{(j)})_{1\leq l<j\leq m}\in
		\prod_{1\leq l<j\leq m} M_{n_j+1,n_l+1}(\bz)$ defined as in
		\eqref{ract}. Then the line bundle $W_{j,k}/W_{j,k-1}\to B_j^{quo}$
		is isomorphic to the line bundle
		$\eta({\bf{0}},{\bf{0}},\ldots,{\bf{0}}, {\bf{e}}_k)\to B_j^{quo}$
		as defined in \eqref{line}, where
		${\bf{e}}_k:= (0,\ldots,0,1,0,\ldots,0)\in \bz^{n_j+1}$ has $1$ in
		the $k$-th place with all other entries zero.
	\end{lema}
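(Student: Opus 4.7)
The plan is to construct an explicit holomorphic isomorphism at the level of total spaces. Via the identification $B_j^{quo}\cong Flag(\eta^{(j)})$ of Proposition \ref{prop}, a point of $B_j^{quo}$ is represented by $[g_1,\ldots,g_j]$; the fiber of $W_{j,k}$ over this point is the span $V_k$ of the first $k$ columns of $g_j$ viewed as vectors in $\eta^{(j)}_{[g_1,\ldots,g_{j-1}]}$, and the fiber of $W_{j,k}/W_{j,k-1}$ is spanned by the class of the $k$-th column $c_k(g_j)$ modulo $V_{k-1}$. On the other hand, the line bundle $\eta(\mathbf{0},\ldots,\mathbf{0},\mathbf{e}_k)$ has total space the orbit space of $\prod_{l=1}^j GL(n_l+1)\times\mathbb{C}$ in which $(b_1,\ldots,b_j)$ acts on the $\mathbb{C}$-factor by $w\mapsto b_j^{-\mathbf{e}_k}w=(b_j)_{kk}^{-1}w$.

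The candidate map I would propose is
\[
\psi:\eta(\mathbf{0},\ldots,\mathbf{0},\mathbf{e}_k)\longrightarrow W_{j,k}/W_{j,k-1},\qquad
[g_1,\ldots,g_j,w]\longmapsto \bigl([g_1,\ldots,g_j],\, w\cdot c_k(g_j)\bmod V_{k-1}\bigr).
\]
The crucial step is checking well-definedness under $\Phi_j^{\mathfrak{P}}$. Under $(b_1,\ldots,b_j)$, the last coordinate $g_j$ becomes $g_j'=D\cdot g_j\cdot b_j$, where $D=\Psi_1^{(j)}(b_1)^{-1}\cdots\Psi_{j-1}^{(j)}(b_{j-1})^{-1}\in D(n_j+1)$, while $w$ becomes $(b_j)_{kk}^{-1}w$. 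Upper-triangularity of $b_j\in B_{GL(n_j+1)}$ gives $c_k(g_jb_j)=\sum_{i\leq k}(b_j)_{ik}\,c_i(g_j)$, and Remark \ref{dslbs} says that left multiplication by the diagonal matrix $D$ acts trivially on a fiber of $\eta^{(j)}$. Hence, inside the fiber $\eta^{(j)}_{[g_1,\ldots,g_{j-1}]}$,
\[
c_k(g_j') \equiv \sum_{i\leq k}(b_j)_{ik}\,c_i(g_j)\equiv (b_j)_{kk}\,c_k(g_j)\pmod{V_{k-1}},
\]
so the scalar $(b_j)_{kk}^{-1}$ introduced on $w$ cancels exactly against the factor produced on $c_k(g_j')$. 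Thus $\psi$ descends to the orbit spaces.

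Once well-definedness is in hand, the remaining verifications are straightforward. Fiberwise, $\psi$ sends the scalar $w$ to $w\cdot c_k(g_j)\bmod V_{k-1}$, which is a linear isomorphism because $g_j\in GL(n_j+1)$ forces $c_k(g_j)\notin V_{k-1}$. The assignment is clearly holomorphic on the product $\prod_l GL(n_l+1)\times\mathbb{C}$ and commutes with the bundle projections to $B_j^{quo}$, so it induces a holomorphic bundle map covering the identity, and being a fiberwise isomorphism it is an isomorphism of line bundles.

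The main obstacle I anticipate is purely bookkeeping: tracking how the twisted $D(n_j+1)$-action on $g_j$ propagates to the $k$-th column and reconciling it with the scalar twist on $w$. The mechanism that makes the proof work is exactly the pairing between the upper-triangular structure of $b_j$ (which isolates the diagonal entry $(b_j)_{kk}$ after quotienting by $V_{k-1}$) and the identification in Remark \ref{dslbs} (which absorbs the $D(n_j+1)$-prefactor); once these two observations are in place, the cancellation of the $(b_j)_{kk}$-factors is immediate.
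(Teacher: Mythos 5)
Your proposal is correct and uses essentially the same argument as the paper: the key computation in both is that, modulo $V_{k-1}$, the $k$-th column of $\Psi_1^{(j)}(b_1)^{-1}\cdots\Psi_{j-1}^{(j)}(b_{j-1})^{-1}g_jb_j$ equals $(b_j)_{kk}$ times the $k$-th column of $g_j$, with the upper-triangularity of $b_j$ isolating the diagonal entry and Remark \ref{dslbs} absorbing the diagonal prefactor. The paper phrases this as identifying the transformation law of the generating class $\bar{\bf u}_k$ and then invoking the definition \eqref{line}, whereas you package the identical cancellation as well-definedness of an explicit bundle map; this is only a difference of presentation.
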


	\begin{proof}
		By Proposition \ref{prop}, any element
		$g=[g_1,\,g_2,\ldots,g_j]\in B_j^{quo}$ can be considered as a full
		flag
		$$(p_j(g),\,\underline{V}) :=\Big(p_j(g),\, V_0\subset V_1\subset
		V_2\subset \cdots\subset V_{n_j}\subset
		V_{n_j+1}=\eta^{(j)}_{p_j(g)}\Big)$$ where
		$\eta^{(j)}:=\oplus_{k=1}^{n_j+1}\eta({\bf{v}}^{(j)}_{k,1},{\bf{v}}^{(j)}_{k,2},\ldots,{\bf{v}}^{(j)}_{k,j-1})$
		and ${\bf{v}}^{(j)}_{k,l}$ is the $k$-th row vector of the matrix
		$P^{(j)}_l$ for each $1\leq l\leq j-1$. The fiber of $W_{j,k}$ at
		$g\in B_j^{quo}$ is the vector space
		$V_k\subset \eta^{(j)}_{p_j(g)}$ which is spanned by first $k$
		columns of $g_j\in GL(n_j+1)$ say,
		${\bf{u}}_1,\, {\bf{u}}_2,\ldots,{\bf{u}}_k \in \eta
		^{(j)}_{p_j(g)}$ . Hence the fiber of the canonical line bundle
		$W_{j,k}/W_{j,k-1}$ at $g$ is $V_k/V_{k-1}$ which is spanned
		by the class $\bar{{\bf{u}}_k}\in V_k/V_{k-1}$ of ${\bf{u}}_k\in \eta ^{(j)}_{p_j(g)}$.\\
		Let $b=(b_1,\ldots,b_j)$ be an element of
		$\displaystyle\prod_{i=1}^jB_{GL(n_i+1)}$. Then it can be seen that the class
		$\bar{{\bf{u}}'_k}\in V_k/V_{k-1}$ of the $k$-th column vector
		${\bf{u}}'_k$ of the last coordinate
		$$ {(\Psi_1^{(j)}(b_1))}^{-1}{(\Psi_2^{(j)}(b_2))}^{-1}\cdots
		{(\Psi_{j-1}^{(j)}(b_{j-1}))}^{-1}g_jb_j$$ of
		$\Phi_j^{\mathfrak{P}}(g,\,b)$ is equal to
		$b_j^{{\bf{e}}_k}\cdot \bar{{\bf{u}}_k}$. This is because the $k$th
		column vector ${\bf{u}}'_k$ is in the span of
		${\bf{u}}_1,\, {\bf{u}}_2,\ldots, {\bf{u}}_k $ where the coefficient
		of ${\bf{u}}_k$ is
		\[ {(\Psi_1^{(j)}(b_1))}^{-1}{(\Psi_2^{(j)}(b_2))}^{-1}\cdots
		{(\Psi_{j-1}^{(j)}(b_{j-1}))}^{-1}\cdot b_j^{{\bf{e}}_k}.\] Now, by
		Remark \ref{dslbs} we have the
		equivalence
		$${(\Psi_1^{(j)}(b_1))}^{-1}{(\Psi_2^{(j)}(b_2))}^{-1}\cdots
		{(\Psi_{j-1}^{(j)}(b_{j-1}))}^{-1}\cdot b^{{\bf{e}}_k}\cdot
		{\bf{u}}_k\,\sim\,b^{{\bf{e}}_k}\cdot {\bf{u}}_k$$ in
		$(\eta^{(j)})_{[g_1,\ldots, g_{j-1}]}$.  The result now follows by
		the definition (see \eqref{line}) of the holomorphic line bundle
		$\eta({\bf{0}},{\bf{0}},\ldots,{\bf{0}}, {\bf{e}}_k)$ on
		$B_j^{quo}$.
	\end{proof}

	\begin{coro}\label{cor}
		The line bundle $p_j^*\circ \cdots\circ p^*_{l+1}(W_{l,k}/W_{l,k-1})\to B_j^{quo}$ is isomorphic to the line bundle $$\eta({\bf{0}},{\bf{0}},\ldots,{\bf{0}}, {\bf{e}}_k,\underbrace{{\bf{0}},{\bf{0}},\ldots,{\bf{0}}}_{j-l})\to B_j^{quo}$$ as defined in \eqref{line}, where ${\bf{e}}_k:= (0,0,\ldots,1,0,\ldots,0)\in \bz^{n_l+1}$ has $1$ in the $k$-th place with all other entries zero.
	\end{coro}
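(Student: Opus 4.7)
The plan is to combine Lemma \ref{line2} with an inductive observation describing how pullback along each $p_i$ transforms the orbit-space line bundles $\eta(\cdot)$. By Lemma \ref{line2}, on $B_l^{quo}$ we already have the identification $W_{l,k}/W_{l,k-1}\cong \eta({\bf 0},\ldots,{\bf 0},{\bf e}_k)$, where the nontrivial entry sits in the $l$-th slot. What remains is to transport this identification through the successive pullbacks $p_{l+1}^*, p_{l+2}^*, \ldots, p_j^*$, and to check that each pullback has the effect of appending a ${\bf 0}$ vector.

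The key intermediate statement I would formulate and verify is: for any tuple $({\bf v}_1,\ldots,{\bf v}_i)\in\bigoplus_{s=1}^{i}\bz^{n_s+1}$, there is a natural isomorphism
\[
p_{i+1}^*\bigl(\eta({\bf v}_1,\ldots,{\bf v}_i)\bigr)\;\cong\;\eta({\bf v}_1,\ldots,{\bf v}_i,{\bf 0})
\]
of line bundles on $B_{i+1}^{quo}$. To prove this, I would write down the orbit-space description of both sides and produce the isomorphism explicitly. Unwinding the action \eqref{ract} in the definition \eqref{line}, the right action of $\prod_{s=1}^{i+1}B_{GL(n_s+1)}$ on $\prod_{s=1}^{i+1}GL(n_s+1)\times\bc$ defining $\eta({\bf v}_1,\ldots,{\bf v}_i,{\bf 0})$ acts on the scalar coordinate by $b_1^{-{\bf v}_1}\cdots b_i^{-{\bf v}_i}\cdot b_{i+1}^{-{\bf 0}}\cdot w = b_1^{-{\bf v}_1}\cdots b_i^{-{\bf v}_i}\cdot w$, that is, it is insensitive to $b_{i+1}$ on $\bc$. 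The map
\[
(g_1,\ldots,g_{i+1},w)\;\longmapsto\;\bigl([g_1,\ldots,g_{i+1}],\,[g_1,\ldots,g_i,w]\bigr)
\]
therefore descends to a well-defined morphism into the fibre product $B_{i+1}^{quo}\times_{B_i^{quo}}\eta({\bf v}_1,\ldots,{\bf v}_i)$, and a direct check shows it is a fibrewise linear isomorphism.

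Granted this intermediate claim, the corollary follows by induction on $j-l$. Starting from $\eta({\bf 0},\ldots,{\bf 0},{\bf e}_k)\to B_l^{quo}$ given by Lemma \ref{line2}, each application of $p_{i+1}^*$ for $i=l,l+1,\ldots,j-1$ appends one zero vector to the tuple of indices, producing after $j-l$ steps the line bundle $\eta({\bf 0},\ldots,{\bf 0},{\bf e}_k,\underbrace{{\bf 0},\ldots,{\bf 0}}_{j-l})\to B_j^{quo}$, as required.

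The only delicate point is the verification of the intermediate isomorphism above, which is really bookkeeping with the quotient description \eqref{line}: one must check that the map is well defined on orbits (immediate from the action formula with trivial weight ${\bf 0}$ in the last slot) and that it is bijective on fibres over $B_{i+1}^{quo}$ (clear, since on fibres it is the identity on $\bc$). Apart from this, the argument is a purely formal iteration.
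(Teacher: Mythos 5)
Your proposal is correct and follows essentially the same route as the paper: both arguments establish the intermediate claim $p_{i+1}^*\eta({\bf v}_1,\ldots,{\bf v}_i)\cong\eta({\bf v}_1,\ldots,{\bf v}_i,{\bf 0})$ by unwinding the orbit description \eqref{line} (the trivial weight in the last slot makes the scalar action insensitive to $b_{i+1}$), and then iterate starting from Lemma \ref{line2}. Your explicit map to the fibre product is just a slightly more detailed write-up of the same bookkeeping the paper carries out.
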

	\begin{proof}
		Let $\eta({\bf{v}}_1,{\bf{v}}_2,\ldots,{\bf{v}}_l)$ be any
		holomorphic line bundle over $B_l^{quo}$ for an integer vector
		$\displaystyle({\bf{v}}_1,{\bf{v}}_2,\ldots,{\bf{v}}_l)\in
		\bigoplus_{i=1}^{l}\bz^{n_i+1}$ as defined in \eqref{line}. Then
		$p_{l+1}^*\eta({\bf{v}}_1,{\bf{v}}_2,\ldots,{\bf{v}}_l)$ is a
		holomorphic line bundle over $B_{l+1}^{quo}$, whose fiber over a
		point $[g_1,\ldots,g_l,\,g_{l+1}]\in B_{l+1}^{quo}$ is
		$\eta({\bf{v}}_1,{\bf{v}}_2,\ldots,{\bf{v}}_l)_{[g_1,\ldots,\,g_l]}$
		and, is defined by
		$(g_1,g_2,\ldots, g_{l+1},w)\cdot(b_1,b_2,\ldots,b_l,\,b_{l+1})$
		
		$:=\big(\Phi_{l+1}^{\mathfrak{P}}((g_1,g_2,\ldots, g_{l+1})\cdot(b_1,b_2,\ldots, b_{l+1})),b_1^{-{\bf{v}}_1}\cdots b_l^{-{\bf{v}}_l}w\big)$
		
		$ = \big(\Phi_{l+1}^{\mathfrak{P}}((g_1,g_2,\ldots, g_{l+1})\cdot(b_1,b_2,\ldots, b_{l+1})),b_1^{-{\bf{v}}_1}\cdots b_l^{-{\bf{v}}_l}b_{l+1}^{-\bf{0}}w\big)$
		
		for $(g_1,g_2,\ldots, g_{l+1}, w)\in \prod_{i=1}^{l+1} GL(n_i+1)\times \bc$ and  $(b_1,b_2,\,.\,.\,, b_{l+1})\in  \prod_{i=1}^{l+1} B_{GL(n_i+1)}$.
		
		Therefore, $p_{l+1}^*\eta({\bf{v}}_1,{\bf{v}}_2,\ldots,{\bf{v}}_l)$ is isomorphic to $\eta({\bf{v}}_1,{\bf{v}}_2,\ldots,{\bf{v}}_l, {\bf{0}})$ as line bundles over $B_{l+1}^{quo}$. Hence, the result follows from Theorem \ref{line2} by repeating the above procedure for higher $j>l+1$. 
	\end{proof}

	\begin{lema}\label{tensor}
		Let $\eta({\bf{u}}_1,{\bf{u}}_2,\ldots,{\bf{u}}_j)$ and
		$\eta({\bf{v}}_1,{\bf{v}}_2,\ldots,{\bf{v}}_j)$ be two line bundles
		over $B_j^{quo}$ for
		$\displaystyle ({\bf{u}}_1,{\bf{u}}_2,\ldots,{\bf{u}}_j)$,
		$({\bf{v}}_1,{\bf{v}}_2,\ldots,{\bf{v}}_j)\in
		\bigoplus_{l=1}^{j}\bz^{n_l+1}$. Then
		$\eta({\bf{u}}_1+{\bf{v}}_1,{\bf{u}}_2+{\bf{v}}_2,\ldots,{\bf{u}}_j+{\bf{v}}_j)\simeq
		\eta({\bf{u}}_1,{\bf{u}}_2,\ldots,{\bf{u}}_j)\otimes\eta({\bf{v}}_1,{\bf{v}}_2,\ldots,{\bf{v}}_j)$
		are isomorphic as line bundles over $B_j^{quo}$.
	\end{lema}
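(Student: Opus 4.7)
The plan is to realize each $\eta({\bf w}_1,\ldots,{\bf w}_j)$ as an associated line bundle. By Lemma \ref{frpr}, the map $\prod_{l=1}^j GL(n_l+1)\to B_j^{quo}$ is a principal bundle for the group $\prod_{l=1}^j B_{GL(n_l+1)}$, and inspecting \eqref{line} one sees that $\eta({\bf w}_1,\ldots,{\bf w}_j)$ is precisely the line bundle associated to this principal bundle via the character
\[
\chi_{{\bf w}_1,\ldots,{\bf w}_j}(b_1,\ldots,b_j):=\Gamma_1(b_1)^{{\bf w}_1}\cdots\Gamma_j(b_j)^{{\bf w}_j}
\]
of $\prod_{l=1}^j B_{GL(n_l+1)}$. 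Under this identification, the lemma is just the standard fact that the assignment $\chi\mapsto (P\times \bc)/\chi$ is a group homomorphism from the character lattice $\bigoplus_{l=1}^j\bz^{n_l+1}$ to $\mathrm{Pic}(B_j^{quo})$, with tensor product of line bundles corresponding to multiplication of characters, i.e.\ to addition on $\bigoplus_{l=1}^j\bz^{n_l+1}$.

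Concretely, I would write down the map
\[
\Theta:\eta({\bf u}_1,\ldots,{\bf u}_j)\otimes \eta({\bf v}_1,\ldots,{\bf v}_j)\longrightarrow \eta({\bf u}_1+{\bf v}_1,\ldots,{\bf u}_j+{\bf v}_j)
\]
by sending $[(g_1,\ldots,g_j,w)]\otimes [(g_1,\ldots,g_j,w')]\mapsto [(g_1,\ldots,g_j,ww')]$, having first observed that in any fibre we may lift the two tensor factors to representatives sharing a common $\prod_{l=1}^j GL(n_l+1)$-coordinate. The key well-definedness check is that under simultaneous right translation by $(b_1,\ldots,b_j)\in \prod_{l=1}^j B_{GL(n_l+1)}$, the $\Phi_j^{\mathfrak{P}}$-component is identical for all three bundles, while the scalar coordinate transforms as
\[
\bigl(b_1^{-{\bf u}_1}\cdots b_j^{-{\bf u}_j}w\bigr)\bigl(b_1^{-{\bf v}_1}\cdots b_j^{-{\bf v}_j}w'\bigr)=b_1^{-({\bf u}_1+{\bf v}_1)}\cdots b_j^{-({\bf u}_j+{\bf v}_j)}\,ww',
\]
which is exactly the scalar action defining $\eta({\bf u}_1+{\bf v}_1,\ldots,{\bf u}_j+{\bf v}_j)$ in \eqref{line}.

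It remains to verify that $\Theta$ is $\bc$-linear on each fibre and admits a two-sided inverse given by $[(g_1,\ldots,g_j,w)]\mapsto [(g_1,\ldots,g_j,w)]\otimes [(g_1,\ldots,g_j,1)]$; both are routine once well-definedness is established. There is no substantive obstacle here: the only point requiring attention is that the $\Phi_j^{\mathfrak{P}}$-twist acting on the $\prod GL(n_l+1)$-coordinate is the \emph{same} for $\eta({\bf u})$, $\eta({\bf v})$ and $\eta({\bf u}+{\bf v})$, so it cancels from both sides of the well-definedness identity and only the scalar twists have to match, which they do by the displayed computation.
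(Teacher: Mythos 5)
Your proposal is correct and follows essentially the same route as the paper: both realize $\eta({\bf u}_1,\ldots,{\bf u}_j)$, $\eta({\bf v}_1,\ldots,{\bf v}_j)$ and their tensor product as quotients of $\prod_{l=1}^j GL(n_l+1)\times\bc$ under the same $\Phi_j^{\mathfrak{P}}$-twist on the group coordinate, and verify that the scalar twists multiply, $b_1^{-{\bf u}_1}\cdots b_j^{-{\bf u}_j}\cdot b_1^{-{\bf v}_1}\cdots b_j^{-{\bf v}_j}=b_1^{-({\bf u}_1+{\bf v}_1)}\cdots b_j^{-({\bf u}_j+{\bf v}_j)}$, which is exactly the computation in the paper's proof. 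Your additional framing via characters of the principal $\prod_{l=1}^j B_{GL(n_l+1)}$-bundle and the explicit map $\Theta$ with its inverse is a harmless repackaging of the same argument.
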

	\begin{proof}
		Let $\eta({\bf{u}}_1,{\bf{u}}_2,\ldots,{\bf{u}}_j)$ be a line bundle  over $B_j^{quo}$ as defined in \eqref{line}, respectively $\eta({\bf{v}}_1,{\bf{v}}_2,\ldots,{\bf{v}}_j)$. In which, right $\displaystyle\prod_{l=1}^{j} B_{GL(n_l+1)}$- action over $\big(\displaystyle\prod_{l=1}^{j} GL(n_l+1)\times \mathbb{C}\big)$ is given as follows :\\
		$(g_1,\ldots, g_j,w_1)\cdot(b_1,\ldots, b_j):=(\Phi_j^{\mathfrak{P}}((g_1,\ldots, g_j).(b_1,\ldots, b_j)),b_1^{-{\bf{u}}_1}\cdots b_j^{-{\bf{u}}_j}w_1)$.
		\noindent
		respectively, \\
		$(g_1,\ldots, g_j,w_2)\cdot(b_1,\ldots, b_j):=(\Phi_j^{\mathfrak{P}}((g_1,\ldots, g_j).(b_1,\ldots, b_j)),b_1^{-{\bf{v}}_1}\cdots b_j^{-{\bf{v}}_j}w_2)$\\
		for $\displaystyle (g_1,g_2,\ldots, g_j)\in \prod_{l=1}^{j}GL(n_l+1)$,
		$\displaystyle(b_1,b_2,\ldots, b_j)\in \prod_{l=1}^{j} B_{GL(n_l+1)}$ and $w_1, w_2\in\bc$.\\
		
		\noindent
		Hence for $\eta({\bf{u}}_1,{\bf{u}}_2,\ldots,{\bf{u}}_j)\otimes\eta({\bf{v}}_1,{\bf{v}}_2,\ldots,{\bf{v}}_j)$, we have the following right $\displaystyle\prod_{l=1}^{j} B_{GL(n_l+1)}$- action over $\displaystyle\big(\prod_{l=1}^{j} GL(n_l+1)\times \mathbb{C}\big)$ :
		\begin{equation*}
			\begin{split}
				&  (g_1,g_2,\ldots, g_j,w_1\otimes w_2)\cdot(b_1,b_2,\ldots, b_j) =\\
				& \big(\Phi_j^{\mathfrak{P}}((g_1,g_2,\ldots, g_j)\cdot(b_1,b_2,\ldots, b_j))\,,\,b_1^{-{\bf{u}}_1}\cdots b_j^{-{\bf{u}}_j}w_1\,\otimes\,b_1^{-{\bf{v}}_1}\cdots b_j^{-{\bf{v}}_j}w_2\big)\\
				=& \big(\Phi_j^{\mathfrak{P}}((g_1,g_2,\ldots, g_j)\cdot(b_1,b_2,\ldots, b_j))\,,\,b_1^{-{\bf{u}}_1-{\bf{v}}_1}\cdots b_j^{-{\bf{u}}_j-{\bf{v}}_j}w_1\otimes w_2\big)\\
				=& \big(\Phi_j^{\mathfrak{P}}((g_1,g_2,\ldots, g_j)\cdot(b_1,b_2,\ldots, b_j))\,,\,b_1^{-({\bf{u}}_1+{\bf{v}}_1)}\cdots b_j^{-({\bf{u}}_j+{\bf{v}}_j)}w_1\otimes w_2\big).
			\end{split}
		\end{equation*}
		The second equality above follows since
		$b_1^{-{\bf{u}}_1}\cdots b_j^{-{\bf{u}}_j}$ and
		$b_1^{-{\bf{v}}_1}\cdots b_j^{-{\bf{v}}_j}$ are both scalars.  Hence
		the result follows from \eqref{line}.
	\end{proof}

	\begin{notn}
		Let $\mathcal{L}_{j,k}:=W_{j,k}/W_{j,k-1}$ for $1\leq k\leq n_j+1$ and
		$1\leq j\leq m$.
	\end{notn}

	\brem\label{algebraicline bundles} We note that in the construction of
	flag Bott manifolds $B_j$ and the line bundles
	$\eta({\bf{v}}_1,{\bf{v}}_2,\ldots,{\bf{v}}_j)$ (see \eqref{ract}) and
	\eqref{line}) that all the objects are algebraic varieties and all the
	morphisms are algebraic morphisms. Thus by construction the flag Bott
	manifolds $B_j$, $1\leq j\leq m$ are complex algebraic varieties and
	the tautological line bundles $\mathcal{L}_{j,k}$ for
	$1\leq k\leq n_j+1$ and $1\leq j\leq m$ are complex algebraic line
	bundles.  \erem

	\begin{propo}(see \cite[Lemma 2.11]{klss})
		Let $B_\bullet$ be a flag Bott tower. Then the $Pic(B_j)$ is generated by the set of line bundles $$\{\mathcal{L}_{j,k}\,|\, 1\leq k\leq n_j+1\}\,\cup\, \bigcup_{l=1}^{j-1}\{p_j^*\,\circ\, \cdots\,\circ\, p^*_{l+1}(\mathcal{L}_{l,k}) \,|\, 1\leq k\leq n_l+1\}$$ for each $1\leq j\leq m$.
	\end{propo}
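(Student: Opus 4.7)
The plan is to proceed by induction on the stage index $j$.

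For the base case $j=1$, we have $B_1 = B_1^{quo} = Flag(n_1+1) = GL(n_1+1)/B_{GL(n_1+1)}$, the full flag variety. Its Picard group is classically known to be generated by the tautological quotient line bundles $\mathcal{L}_{1,k} = W_{1,k}/W_{1,k-1}$ for $1 \leq k \leq n_1+1$; this can be seen by exhibiting $Flag(n_1+1)$ as an iterated projective bundle over a point and applying the projective bundle formula at each stage, or by identifying $Pic$ with the character lattice $X^*(D(n_1+1))$ via the Borel description.

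For the inductive step, suppose the claim holds for $B_{j-1}^{quo}$. By Proposition \ref{prop}, $B_j^{quo} = Flag(\eta^{(j)})$ is the full flag bundle of the rank $n_j+1$ holomorphic vector bundle $\eta^{(j)}$ on $B_{j-1}^{quo}$. The key input is the classical formula
$$Pic(Flag(E)) = p^*Pic(X) + \sum_{k=1}^{r} \mathbb{Z}[\mathcal{L}_k]$$
for a full flag bundle $p: Flag(E) \to X$ of a rank $r$ holomorphic vector bundle $E$, where the $\mathcal{L}_k$ are the tautological quotients of the universal flag. Applied to $\eta^{(j)}$, this gives
$$Pic(B_j^{quo}) = p_j^*Pic(B_{j-1}^{quo}) + \sum_{k=1}^{n_j+1} \mathbb{Z}[\mathcal{L}_{j,k}].$$
Feeding in the inductive hypothesis for $Pic(B_{j-1}^{quo})$ and pulling back its listed generators along $p_j^*$ produces precisely the generating set in the statement.

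The main technical point is justifying the flag-bundle Picard formula. The approach is to factor $p : Flag(E) \to X$ as a tower of projective bundles: first form $\mathbb{P}(E) \to X$ with its tautological subline bundle $\mathcal{L}_1 \subset p_1^*E$, then form the projective bundle of the quotient $p_1^*E/\mathcal{L}_1$ over $\mathbb{P}(E)$, and so on, the $k$-th stage being a $\mathbb{P}^{r-k}$-bundle whose $\mathcal{O}(1)$ corresponds to the dual of $\mathcal{L}_k$. At each stage the classical formula $Pic(\mathbb{P}(F)) = \pi^*Pic(Y) \oplus \mathbb{Z}[\mathcal{O}_{\mathbb{P}(F)}(1)]$ adds exactly one new generator, and iterating through all $r$ stages yields the required decomposition. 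This step is the main obstacle in the sense that it is the one global input; once it is in hand, the rest of the argument is just unwinding the inductive hypothesis through $p_j^*$.
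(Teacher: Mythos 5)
Your argument is correct, but it takes a different route from the paper. You stay entirely inside Picard groups: induct on the stage $j$ and use the relative Picard formula $Pic(Flag(E))=p^{*}Pic(X)+\sum_{k}\mathbb{Z}[\mathcal{L}_{k}]$, justified by factoring the flag bundle as an iterated projective bundle and invoking $Pic(\mathbb{P}(F))\simeq \pi^{*}Pic(Y)\oplus\mathbb{Z}[\mathcal{O}_{\mathbb{P}(F)}(1)]$ at each stage. The paper instead passes to cohomology: it uses the cycle map isomorphism for iterated flag bundles (Fulton, Example 19.1.11) together with Poincar\'e duality to show that $c_{1}\colon Pic(B_{j})\to H^{2}(B_{j})$ is an isomorphism, and then identifies generators of $H^{2}(B_{j})$ via the Leray--Hirsch description of the cohomology of a flag bundle (Bott--Tu, Remark 21.18) and induction on stages. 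Your approach is more elementary in that it avoids Chow groups and the cycle map, and it proves generation directly at the level of line bundles; its one global input, the projective-bundle Picard formula, does require justification in the category you are working in --- either via GAGA and the algebraic statement for smooth projective varieties (note the paper's remark that the $B_{j}$ and the $\eta^{(j)}$ are algebraic), or via the Leray spectral sequence for $\mathcal{O}^{*}$ in the holomorphic setting, where it is important that $\eta^{(j)}$ is a projectivized vector bundle so that a relative $\mathcal{O}(1)$ exists globally. The paper's route buys something extra that is used implicitly later: the fact that $c_{1}$ is an isomorphism, i.e.\ $Pic(B_{j})\simeq H^{2}(B_{j})$, which ties the holomorphic line bundles to topological ones in the spirit of the subsequent $K$-theoretic comparisons. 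Also note that your formula gives a direct sum decomposition, which is stronger than the generation statement actually claimed; only generation is needed, so the dual/sign conventions for the tautological bundles at each projective-bundle stage are harmless, as you observe.
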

	\begin{proof}
		Let $X$ be a flag bundle over $Y$. Recall from (Example 19.1.11, \cite{ful}), that the cycle map $cl_X: A_k(X)\to H_{2k}(X)$ is an isomorphism if and only if $cl_Y$ is an isomorphism. Moreover, the cycle map is an isomorphism for an arbitrary flag manifold. Since, $B_j$ is an iterated bundle of flags over a point, the cycle map $cl_{B_j}: A_k({B_j})\to H_{2k}({B_j})$ is an isomorphism. On the other hand, since flag Bott manifolds are smooth projective varieties, we have the following isomorphism :
		\begin{equation}\label{cycle}
			Pic(B_j)\xrightarrow{\simeq} A_{(dim_{\bc}{B_j})-1}({B_j})\xrightarrow[cl_{B_j}]{\simeq} H_{2{(dim_{\bc}{B_j})-2}}({B_j})\xrightarrow{\simeq}H^2(B_j)
		\end{equation}
		\noindent
		where, the first isomorphism comes from (Example 2.1.1, \cite{ful}) and the last isomorphism is due to the well known Poincare duality. Hence,  $c_1: Pic(B_j)\to H^2(B_j)$ is an isomorphism by \eqref{cycle}.\\
		\noindent
		Now, using the result (Remark 21.18, \cite{bt}) on the cohomology ring
		of the induced flag bundle and an induction on the stages of
		$B_\bullet$, we see that $H^2(B_j)$ is generated by the first Chern
		classes of line bundles
		$$\{\mathcal{L}_{j,k}\,|\, 1\leq k\leq n_j+1\}\cup\,
		\bigcup_{l=1}^{j-1}\{p_j^*\,\circ \, \cdots \, \circ\,
		p^*_{l+1}(\mathcal{L}_{l,k}) \,|\, 1\leq k\leq n_l+1\}$$ for each
		$1\leq j\leq m$. Therefore, any cohomology class of degree 2 can be
		written as the first Chern class of a tensor product of the above line
		bundles. Hence the result follows.
	\end{proof}

	\begin{rema}{(Description of the flag Bott manifold $B_j$ using compact Lie groups)}\label{cfbm}
		We consider the orbit space $$\prod_{l=1}^{j} U(n_l+1)/\prod_{l=1}^{j}
		T(n_l+1)$$ for compact unitary groups $U(n_l+1)$ along with compact
		maximal torus \[T(n_l+1)\simeq (S^1)^{n_l+1}\] for each $1\leq l\leq
		j$. The right action is similar to \eqref{ract} :\\
		$((g_1,g_2,\ldots,g_j), (t_1,t_2,\ldots,t_j)):= (g_1t_1,
		{(\Psi_1^{(2)}(t_1))}^{-1}g_2t_2,\,{(\Psi_1^{(3)}(t_1))}^{-1}
		\\{(\Psi_2^{(3)}(t_2))}^{-1}g_3t_3,\ldots,{(\Psi_1^{(j)}(t_1))}^{-1}{(\Psi_2^{(j)}(t_2))}^{-1}\cdots
		{(\Psi_{j-1}^{(j)}(t_{j-1}))}^{-1}g_jt_j)$ where
		\[\displaystyle (g_1,g_2,\ldots,g_j)\in \prod_{l=1}^{j} U(n_l+1)\] and
		\[\displaystyle (t_1,t_2,\ldots,t_j)\in \prod_{l=1}^{j}
		T(n_l+1).\]
		Then the above manifold is a compact manifold which is diffeomorphic to $B_m$ since $U(n+1)/T(n+1)$ is diffeomorphic to the flag manifold \\$Flag(n+1)=GL(n+1)/B_{GL(n+1)}$ for each $n$.\hfill\qedsymbol
	\end{rema}

	\section{Flag Bott-Samelson Varieties}\label{fbs}
	In this section, we recall the definition of flag Bott-Samelson
	varieties introduced in (\cite[section 2.1]{fls}). In \cite{j}, flag
	Bott-Samelson varieties are considered in the more general setting of
	iterated fibrations of Schubert varieties without explicitly naming
	them. We also recall the one-parameter family of complex structures on
	the flag Bott-Samelson variety and its relation with flag Bott tower
	from \cite[section 4]{fls} in Theorem \ref{fbs2fbm}.
	
	\subsection{Definition of flag Bott-Samelson varieties}
	
	Let $G$ be a simply connected, semisimple algebraic group of rank $n$
	over $\bc$. Let $B\subset G$ be a Borel subgroup and $T\subseteq B$ be
	a maximal torus. Let
	$\displaystyle
	\mathfrak{g}=\mathfrak{h}\oplus\sum_{\alpha}\mathfrak{g}_\alpha$ be
	the Cartan decomposition of the Lie algebra $\mathfrak{g}$ into root
	spaces where $\mathfrak{h}:=Lie(T)$. Let $\Phi\subset \mathfrak{h}^*$
	denote the roots of $G$ and $\Phi^+\subset \Phi$ be a set of positive
	roots (corresponding to $B$), and
	$\Delta=\{\alpha_1,\ldots,\alpha_n\}\subset\,\Phi^+$ denote the set of
	simple roots. Let $\{\alpha_1^\vee,\ldots,\alpha_n^\vee\}$ denote the
	coroots and
	$\{{\omega}_1,\ldots, {\omega}_n\}\subseteq \mathfrak{h}^*$ the
	fundamental weights which are characterized by the relation
	$\langle {\omega}_i\,,\,\alpha_j^\vee\rangle=\delta_{ij}$ where
	$\delta_{ij}$ is the Kronecker symbol.  Similarly let
	$\{{\omega}^{\vee}_1,\ldots, {\omega}^{\vee}_n\}\subseteq
	\mathfrak{h}$ denote the fundamental coweights dual to the simple
	roots.

	Let $W$ denote the Weyl group of $G$. Let $s_i$ denote the simple
	reflection in $W$ corresponding to the simple root $\alpha_i$.  For a
	subset $I\subset [n]:=\{1,\,2,\ldots,n\}$, we define the subgroup
	$W_I:=\langle s_i\,|\,i\in I\rangle$ of $W$. In particular,
	$W_{\emptyset}=\{1\}$ and $W_{[n]}=W$.  We define, the parabolic
	subgroup
	$\displaystyle P_I:=\bigcup_{w\in
		W_I}BwB\,=\,\overline{Bw_IB}\,\subset G$ where $w_I$ denotes the
	longest element of $W_I$.
	
	\begin{defe}[Flag Bott-Samelson variety]
		Let $\mathscr{I}=(I_1,\ldots,I_r)$ be a sequence of subsets of $[n]$
		and let ${\bf{P}}_\mathscr{I}=P_{I_1}\times
		\cdots\times\,P_{I_r}$. We define a right action
		$\Theta: {\bf{P}}_\mathscr{I}\, \times B^r\to\,
		{\bf{P}}_\mathscr{I}$ given by
		\begin{equation}\label{bsact}
			\Theta\big((\fp_1,\ldots,\fp_r)\,,\,(b_1,\ldots,b_r)\big)=(\fp_1b_1,\, b_1^{-1}\fp_2b_2, ,\ldots,b_{r-1}^{-1}\fp_rb_r)
		\end{equation} for $(\fp_1,\ldots,\fp_r)\in {\bf{P}}_\mathscr{I}$ and $(b_1,\ldots,b_r)\in B^r:=\underbrace{B\times \cdots\times B}_r$.
		The flag Bott-Samelson variety $F_\mathscr{I}$ is defined to be the orbit space $$F_\mathscr{I}:={\bf{P}}_\mathscr{I}/\Theta.$$
	\end{defe}
	
	\begin{rema}
		If we take $\mathscr{I}=([n])$. Then we have
		${\bf{P}}_\mathscr{I}=G$.
		Therefore, the flag Bott-Samelson variety $F_\mathscr{I}$ is the flag variety $G/B$.\\
		Moreover, the flag Bott-Samelson variety is a Bott-Samelson variety
		if each $|I_k|=1$. Recall that a Bott-Samelson variety has a family
		of complex structure which induces a toric degeneration \cite{gk,p}.
	\end{rema}

	\begin{rema}
		For the subsequence $\mathscr{I'}=(I_1,\ldots,I_{r-1})$ of $\mathscr{I}$, there is a fibration structure on the flag Bott-Samelson variety $F_\mathscr{I}$ :
		\begin{equation}
			P_{I_r}/B \hookrightarrow F_\mathscr{I}\to F_\mathscr{I'}
		\end{equation} where the projection map $\pi_r: F_\mathscr{I}\to F_\mathscr{I'}$ is defined as $$[\fp_1,\ldots, \fp_{r-1},\,\fp_r]\mapsto [\fp_1,\ldots,\fp_{r-1}].$$
		One can also represent  $F_\mathscr{I}$ as $P_{I_1}\times _B F_\mathscr{I''}$, where $\mathscr{I''}=\,(I_2,\ldots,I_r)$.
	\end{rema}
	
	\subsection{Line bundles over the flag Bott-Samelson variety}
	Let $\mathscr{I}$ be a sequence of subsets of $[n]$. An integral
	weight
	\[\chi\in \mathbb{Z}\omega_1\bigoplus\cdots \bigoplus
	\mathbb{Z}\omega_n={X}^*(T)\] induces a homomorphism
	$e^{\chi}:B\lra \mathbb{C}^*$ by composing with the canonical map
	$\Gamma:B\lra T$.
	
	For $\chi_1,\ldots, \chi_r\in X^*(T)$ we can define the one
	dimensional complex representation $\mathbb{C}_{\chi_1,\ldots,\chi_r}$
	of $B^r$ where $B^r$ acts on $\mathbb{C}$ as follows:
	\[ (b_1,\ldots, b_r)\cdot v:= e^{\chi_1}(b_1)\cdots
	e^{\chi_r}(b_r)\cdot v .\]
	
	Let
	$\mathcal{L}_{\mathscr{I},\chi_1,\ldots,
		\chi_r}={\bf{P}}_{\mathscr{I}}\times_{B^r}
	\mathbb{C}_{\chi_1,\ldots,\chi_r} $ denote the associated line
	bundle on $F_\mathscr{I}={\bf{P}}_{\mathscr{I}}/B^r$.
	
	We let
	\[\mathcal{L}_{\mathscr{I},\chi}:=\mathcal{L}_{\mathscr{I},0,\ldots,0,\chi}.\]

	\subsection{Complex structures on flag Bott-Samelson variety
		$F_\mathscr{I}$}
	
	Let $\displaystyle\lambda=\sum_{i=1}^n a_i\cdot \omega_i^{\vee}$ for
	$a_i>0$ for all $1\leq i\leq n$. Then $\lambda$ defines a one
	parameter subgroup $\mathbb{C}^*\lra T$ so that
	$\alpha_i\circ \lambda(t)=t^{a_i}$ for $t\in \mathbb{C}^*$ and
	$1\leq i\leq n$.  Then $\langle \lambda,\alpha_i\rangle=a_i>0$ for
	every $1\leq i\leq n$.  We can choose $a_i=a>0$ for all $i$. We define
	$\Gamma_t:B\lra B$ by
	$\Gamma_t(b)=\lambda(t)\cdot b\cdot \lambda(t)^{-1}$ for
	$t\in \mathbb{C}^*$. We know by \cite[Proposition 3.5]{gk} that
	$\displaystyle\Gamma=\lim_{t\ra 0}\Gamma_{t}$ where $\Gamma: B\lra T$
	is the canonical surjection. We let $\Gamma_0:=\Gamma$.

	Using $\Gamma_t$, we define a right action
	$\Theta_t : {\bf{P}}_\mathscr{I}\,\times\, B^r\to
	{\bf{P}}_\mathscr{I}$ as
	\begin{equation}\label{diffac}
		\Theta_t\big((\fp_1,\ldots,\fp_r)\,,\,(b_1,\ldots,b_r)\big)=(\fp_1b_1,\, \Gamma_t(b_1)^{-1}\fp_2b_2, \ldots,\Gamma_t(b_{r-1})^{-1}\fp_rb_r)
	\end{equation} for $(\fp_1,\ldots,\fp_r)\in {\bf{P}}_\mathscr{I}$ and $(b_1,\ldots,b_r)\in B^r$.  Note that, $\Theta_1$ is same as the right action $\Theta$ in \eqref{bsact}. We consider the family of orbit spaces $$F_\mathscr{I}^t:={\bf{P}}_\mathscr{I}/\Theta_t$$ under the right action $\Theta_t$ for $t\in \bc$.

	Let
	$\mathcal{L}^{t}_{\mathscr{I},\chi_1,\ldots,
		\chi_r}={\bf{P}}_{\mathscr{I}}\times_{B^r}
	\mathbb{C}_{\chi_1,\ldots,\chi_r} $ denote the line bundle associated
	to the $1$-dimensional representation
	$\mathbb{C}_{\chi_1,\ldots,\chi_r}$ of $B^r$ on
	$F^{t}_\mathscr{I}={\bf{P}}_{\mathscr{I}}/B^r$ where the action of
	$B^r$ on ${\bf{P}}_{\mathscr{I}}$ is via $\Theta_t$. We let
	\[\mathcal{L}^t_{\mathscr{I},\chi}:=\mathcal{L}^t_{\mathscr{I},0,\ldots,0,\chi}
	.\]

	\begin{propo}\label{diffeo}(see \cite[Proposition 4.6]{fls})
		Let $\mathscr{I}=(I_1,\ldots,I_r)$ be a sequence of subsets of
		$[n]$. Then $\{F_\mathscr{I}^t\}_{t\in \bc}$ are all
		diffeomorphic. Moreover, $F_\mathscr{I}^1=F_\mathscr{I}$.
	\end{propo}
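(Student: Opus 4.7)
The ``moreover'' part is immediate: since $\lambda(1)=e$ we have $\Gamma_1=\mathrm{id}_B$, so $\Theta_1=\Theta$ and $F^1_{\mathscr{I}}=F_{\mathscr{I}}$. The plan for the diffeomorphism statement is to handle $t\in\bc^*$ by an explicit intertwining map and then extend to $t=0$ via Ehresmann's fibration theorem applied to a family over $\bc$.

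For $t\in\bc^*$, I would define
$$\Phi_t:\mathbf{P}_{\mathscr{I}}\lra \mathbf{P}_{\mathscr{I}},\qquad (\fp_1,\fp_2,\ldots,\fp_r)\longmapsto (\fp_1,\,\lambda(t)\fp_2,\,\lambda(t)\fp_3,\ldots,\lambda(t)\fp_r).$$
Since $\lambda(t)\in T\subseteq P_{I_k}$ for every $k$, the map $\Phi_t$ is a well-defined diffeomorphism of $\mathbf{P}_{\mathscr{I}}$, with inverse obtained by replacing $\lambda(t)$ by $\lambda(t)^{-1}$. A direct calculation based on the identity $\Gamma_t(b)^{-1}\lambda(t)=\lambda(t)\,b^{-1}$ yields
$$\Phi_t\bigl(\Theta_1((\fp_1,\ldots,\fp_r),(b_1,\ldots,b_r))\bigr)=\Theta_t\bigl(\Phi_t(\fp_1,\ldots,\fp_r),(b_1,\ldots,b_r)\bigr),$$
so $\Phi_t$ carries $\Theta_1$-orbits bijectively onto $\Theta_t$-orbits and descends to a diffeomorphism $\bar\Phi_t:F^1_{\mathscr{I}}\xrightarrow{\sim} F^t_{\mathscr{I}}$.

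For $t=0$ the explicit $\Phi_t$ does not extend, since $\lambda(t)$ has no limit in $T$ as $t\to 0$. I would instead form the total space
$$\mathcal{F}:=(\mathbf{P}_{\mathscr{I}}\times\bc)/\widetilde{\Theta},$$
where $\widetilde{\Theta}$ is the right $B^r$-action that is trivial on the $\bc$-factor and equal to $\Theta_t$ on the fiber over $t$. The key steps are: (i) verify that $\widetilde{\Theta}$ is free and proper, so that $\mathcal{F}$ is a smooth manifold and the projection $\mathcal{F}\to\bc$ is a smooth submersion; (ii) observe that each fiber $F^t_{\mathscr{I}}$ is compact, being an iterated tower of compact flag fibers $P_{I_k}/B$, so that $\mathcal{F}\to\bc$ is proper. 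Ehresmann's fibration theorem then delivers local triviality, and connectedness of $\bc$ forces all fibers $F^t_{\mathscr{I}}$ to be mutually diffeomorphic.

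The principal obstacle lies at $t=0$: one must verify freeness and properness of $\widetilde{\Theta}$ uniformly across $\bc$, since $\Gamma_t$ degenerates from inner conjugation (for $t\neq 0$) to the canonical projection $\Gamma:B\to T$ at $t=0$. A cleaner alternative, in the spirit of Remark \ref{cfbm}, is to use the Iwasawa decomposition $G=KAN$: every $B$-coset has a unique $K$-representative, and the induced identification sends $F^t_{\mathscr{I}}$ to the compact model $\prod_k(K\cap P_{I_k})/(K\cap T)^r$ under an action that, restricted to the compact torus $K\cap T\subseteq T$ where $\Gamma_t$ acts as the identity for every $t\in\bc$, is manifestly independent of $t$. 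This compact description bypasses the degeneration entirely and simultaneously handles all $t\in\bc$.
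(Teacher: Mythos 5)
The paper does not actually prove this proposition; it quotes it from \cite[Proposition 4.6]{fls}, so your write-up should be compared with that cited argument, which (as in Grossberg--Karshon) rests on the compact-group model. Your proposal is essentially correct. The explicit intertwiner for $t\in\bc^*$ is right: since $\lambda(t)\in T$, one has $\Gamma_t(b)^{-1}\lambda(t)=\lambda(t)b^{-1}$, so $\Phi_t$ carries $\Theta_1$-orbits to $\Theta_t$-orbits and descends to $F^1_{\mathscr{I}}\simeq F^t_{\mathscr{I}}$; and $\Gamma_1=\mathrm{id}$ gives the ``moreover'' part. Your ``cleaner alternative'' is in fact the standard (cited) proof and is the route that genuinely handles $t=0$: writing $P_{I_k}=(K\cap P_{I_k})\cdot B$ with $(K\cap P_{I_k})\cap B=K\cap T$ (so a $K$-representative of a $B$-coset is unique only up to $K\cap T$, not unique --- a small imprecision in your phrasing), every $\Theta_t$-orbit meets $\prod_k(K\cap P_{I_k})$ in exactly one orbit of $(K\cap T)^r$, and the induced action on this compact model is independent of $t$ because $\Gamma_t|_T=\mathrm{id}$ for all $t$, including $t=0$ where $\Gamma_0=\Gamma$ restricts to the identity on $T$; this is also the picture behind Remark \ref{cfbm}. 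By contrast, your primary Ehresmann route, while workable, has two steps stated more optimistically than justified: (i) freeness and properness of $\widetilde\Theta$ across $t=0$ require that $(t,b)\mapsto\Gamma_t(b)$ extend smoothly to $t=0$, which is exactly \cite[Proposition 3.5]{gk} quoted in the paper, and (ii) ``each fiber is compact, hence $\mathcal{F}\to\bc$ is proper'' is not automatic for a submersion; the natural way to get properness (and compactness of $F^0_{\mathscr{I}}$ itself) is again to observe that every orbit meets $\prod_k(K\cap P_{I_k})\times\{t\}$, i.e.\ the compact model is doing the real work in either version. So I would promote your last paragraph from ``alternative'' to the main argument and keep the family/Ehresmann discussion, if at all, as a remark.
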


	Henceforth we shall consider $\mathscr{I}=(I_1,\ldots,I_r)$ be a
	sequence of subsets of $[n]$ such that the Levi subgroup $L_{I_k}$ of
	the parabolic subgroup $P_{I_k}$ has Lie type $A_{n_k}$ for all
	$1\leq k\leq r$. We shall take an enumeration
	$I_k=\{u_{k,1},\ldots,u_{k,n_k}\}$ which satisfies
	\begin{equation}\label{con}
		\langle\alpha_{u_{k,s}},\, \alpha_{u_{k,t}}^\vee\rangle=  \begin{cases}
			2 & \text{if $s=t$,}\\
			-1 &\text{if $s-t=\pm 1$,}\\
			0 &\text{otherwise}
		\end{cases}
	\end{equation}

	\bth\label{limitflagbott}(see \cite[Prop. 4.8]{fls}) Let
	$F_\mathscr{I}$ be a flag Bott-Samelson variety. Let $\mathscr{I}'$
	denote the subsequence $(I_1,\ldots, I_{r-1})$ of $\mathscr{I}$. Then
	$F_\mathscr{I}^0$ is diffeomorphic to the following flag bundle over
	$F_{\mathscr{I}'}^0$
	\[ F_\mathscr{I}^0\simeq
	Flag(\mathcal{L}^0_{\mathscr{I}',\chi_1}\oplus \cdots \oplus
	\mathcal{L}^0_{\mathscr{I}',\chi_{n_r}}\oplus 1_{\bc}) \] where
	$\chi_j=\alpha_{u_{r,j}}+\cdots+\alpha_{u_{r,n_r}}\in \mathfrak{h}^*$
	for $1\leq j\leq n_r$,
	$\mathcal{L}^0_{\mathscr{I},\chi}:=\mathcal{L}^0_{\mathscr{I},0,\ldots,0,\chi}$
	and $1_{\bc}$ denotes the trivial complex line bundle.  \eeth

	Moreover, $\mathcal{L}^{0}_{\mathscr{I},\chi_1,\ldots,\chi_r}$ is
	isomorphic to the line bundle on the flag Bott manifold
	$F^{0}_\mathscr{I}$ corresponding to the vector
	$\displaystyle ( \mathbf{a}_1,\ldots, \mathbf{a}_r)\in \prod_{k=1}^r
	\mathbb{Z}^{n_k+1}$ where
	$\mathbf{a}_{k}=(a_{k}(1),\ldots, a_{k}(n_k+1))\in \mathbb{Z}^{n_k+1}$ is
	defined by
	\[a_{k}(l)=\langle \chi_k+\cdots+\chi_r,
	\alpha^{\vee}_{u_{k,l}}+\cdots+\alpha^{\vee}_{u_{k,n_k}} \rangle\] for
	$1\leq l\leq n_k$ and $a_{k}(n_k+1)=0$. Indeed,
	$\displaystyle (\mathbf{a}_1,\ldots, \mathbf{a}_r)\in \prod_{k=1}^r
	\mathbb{Z}^{n_k+1}$ can be identified with the first Chern class of
	the line bundle $\mathcal{L}^{0}_{\mathscr{I},\chi_1,\ldots,\chi_r}$.
	
	Recall by Lemma \ref{line2} that on the flag Bott manifold
	$F^{0}_\mathscr{I}$ the tautological line bundles
	$\mathcal{L}^0_{r,k}$ for $1\leq k\leq n_r+1$ correspond to the vector bundle
	$\eta({\bf 0},\ldots, {\bf 0}, {\bf e}_k)$, where
	${\bf e}_k\in \mathbb{Z}^{n_r+1}$ is the vector with $1$ at the $k$th
	position and $0$ elsewhere. Thus $\mathcal{L}^0_{r,k}$ are associated
	to the characters $\epsilon_1,\ldots, \epsilon_{n_r+1}$ of the maximal
	torus $T_{SL_{n_r+1}}$ of $SL_{n_r+1}$ corresponding to the $n_r+1$
	coordinate projections. Now, $T_{SL_{n_r+1}}$ is of rank $n_r$ and
	$Lie(T_{SL_{n_r+1}})$ is free $\mathbb{Z}$-module on the fundamental
	weights $\omega_{u_{r,1}},\ldots, \omega_{u_{r,n_r}}$. Further,
	$\epsilon_1=\omega_{u_{r,1}},
	\epsilon_{2}=\omega_{u_{r,2}}-\omega_{u_{r,1}},
	\ldots,\epsilon_{n_r}=\omega_{u_{r,n_r}}-\omega_{u_{r,n_r-1}}$ and
	$\epsilon_{n_r+1}=-\omega_{u_{r,n_r}}$.
	
	Thus we have the isomorphisms
	$\mathcal{L}^0_{r,1}\simeq
	\mathcal{L}^0_{\mathscr{I},\omega_{u_{r,1}}},
	\mathcal{L}^0_{r,2}\simeq
	\mathcal{L}^0_{\mathscr{I},{\omega_{u_{r,2}}-\omega_{u_{r,1}}}},\ldots,\\
	\mathcal{L}^0_{r,n_r}\simeq \mathcal{L}^0_{\mathscr{I},
		\omega_{u_{r,n_r}}-\omega_{u_{r,n_r-1}}}$ and
	$\mathcal{L}^0_{r,n_r+1}\simeq \mathcal{L}^0_{\mathscr{I},
		-\omega_{u_{r,n_r}}}$ on $F^{0}_\mathscr{I}$.
	
	\brem\label{corresalgtop} Under the diffeomorphism of
	$F^{0}_\mathscr{I}$ and $F^{1}_\mathscr{I}$, the line bundle
	$\mathcal{L}_{\mathscr{I},\chi_1,\ldots, \chi_r}$ corresponds to
	$\mathcal{L}^0_{\mathscr{I},\chi_1,\ldots, \chi_r}$. In particular,
	$\mathcal{L}^0_{r,1}$ corresponds to
	$\mathcal{L}_{\mathscr{I},\omega_{u_{r,1}}}$, $\mathcal{L}^0_{r,k}$
	corresponds to
	$\mathcal{L}_{\mathscr{I},\omega_{u_{r,k}}-\omega_{u_{r,k-1}}}$ for
	$2\leq k\leq n_r$ and $\mathcal{L}^0_{r,n_r+1}$ corresponds to
	$\mathcal{L}_{\mathscr{I},-\omega_{u_{r,n_r}}}$. Thus under the
	diffeomorphism given by the deformation of complex structures, the
	tautological line bundles $\mathcal{L}^0_{r,k}$ for every
	$1\leq k\leq n_r+1$ correspond to the topological restrictions of
	certain algebraic line bundles on the flag Bott-Samelson variety
	$F_\mathscr{I}$. Similarly for every subsequence
	$\mathscr{I}'=(I_1,\ldots, I_j)$ for $1\leq j\leq r$, the tautological
	line bundles $\mathcal{L}_{j,k}$ for $1\leq k\leq n_j+1$ on the
	$j$-stage flag Bott manifold $F^0_{\mathscr{I}'}$ correspond
	respectively to the restrictions of the algebraic line bundles
	$\mathcal{L}_{\mathscr{I},\omega_{u_{j,1}}}$,
	$\mathcal{L}_{\mathscr{I},\omega_{u_{j,k}}-\omega_{u_{j,k-1}}}$ for
	$2\leq k\leq n_j$ and $\mathcal{L}_{\mathscr{I},-\omega_{u_{j,n_j}}}$
	on the flag Bott-Samelson variety $F_{\mathscr{I}'}$.  \erem

	\begin{guess}\label{fbs2fbm}(\cite[Theorem 4.10]{fls})
		The manifold $F_\mathscr{I}^0$ is an $r$-stage flag Bott manifold
		which is determined by a sequence of
		matrices
		$$\mathfrak{M}:=(Q_l^{(j)})_{1\leq l<j\leq r}\in \prod_{1\leq
			l<j\leq r} M_{n_j+1,n_l+1}(\bz)$$ in the sense of Definition
		\ref{defquo}, where $Q_l^{(j)}(p,q)$
		is
		$$\langle\alpha_{u_{j,p}}+\ldots+ \alpha_{u_{j,n_j}}\,,\,
		\alpha^\vee_{u_{l,q}}+\ldots+ \alpha^\vee_{u_{l,n_l}}\rangle$$ if
		$1\leq p\leq n_j$ and $1\leq q\leq n_l$, and $0$ otherwise.
		
	\end{guess}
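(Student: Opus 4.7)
The plan is to proceed by induction on the length $r$ of the sequence $\mathscr{I}$. For the base case $r=1$, the variety $F^0_\mathscr{I} = P_{I_1}/B$ is identified with $L_{I_1}/B_{L_{I_1}} \cong Flag(\mathbb{C}^{n_1+1})$ because $L_{I_1}$ has type $A_{n_1}$; this is exactly a $1$-stage flag Bott manifold, and the family of matrices $(Q_l^{(j)})_{1\le l<j\le 1}$ is vacuously empty. Note also that for $r=1$ the action $\Theta_t$ reduces to the trivial right translation, so the identification is independent of $t$.

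For the inductive step, set $\mathscr{I}' = (I_1,\ldots,I_{r-1})$ and assume $F^0_{\mathscr{I}'}$ is an $(r-1)$-stage flag Bott manifold determined by matrices $(Q_l^{(j)})_{1\le l<j\le r-1}$. Theorem \ref{limitflagbott} identifies $F^0_\mathscr{I}$ with the flag bundle of
\[\mathcal{L}^0_{\mathscr{I}',\chi_1}\oplus\cdots\oplus \mathcal{L}^0_{\mathscr{I}',\chi_{n_r}}\oplus 1_{\bc}\]
over $F^0_{\mathscr{I}'}$, where $\chi_j=\alpha_{u_{r,j}}+\cdots+\alpha_{u_{r,n_r}}$. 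Combined with the inductive hypothesis, this exhibits $F^0_\mathscr{I}$ as an $r$-stage flag Bott manifold in the sense of Definition \ref{flBott}, so what is left is to read off the new row of matrices $Q_l^{(r)}$ for $1\le l\le r-1$.

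The remaining step is to identify, for each $1\le j\le n_r$, the line bundle $\mathcal{L}^0_{\mathscr{I}',\chi_j}$ on $F^0_{\mathscr{I}'}$ with some $\eta(\mathbf{v}_1,\ldots,\mathbf{v}_{r-1})$ of \eqref{line}. I would apply the character-to-line-bundle recipe recorded in the paragraph following Theorem \ref{limitflagbott} to $\mathcal{L}^0_{\mathscr{I}',0,\ldots,0,\chi_j}$. Since all but the last character vanish, every partial sum $\chi'_l+\cdots+\chi'_{r-1}$ collapses to $\chi_j$, so the $(l,q)$-entry of the resulting vector becomes
\[\langle \alpha_{u_{r,j}}+\cdots+\alpha_{u_{r,n_r}},\; \alpha^\vee_{u_{l,q}}+\cdots+\alpha^\vee_{u_{l,n_l}}\rangle\]
for $1\le q\le n_l$, with $(l,n_l+1)$-entry zero; this is exactly the prescription for $Q_l^{(r)}(j,q)$. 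The trivial summand $1_{\bc}$ corresponds to the zero vector, matching $Q_l^{(r)}(n_r+1,q)=0$. Comparing with Proposition \ref{prop}, the $k$-th row of the matrix $P_l^{(r)}$ realizing the $r$-th stage of $F^0_\mathscr{I}$ is precisely the $k$-th row of $Q_l^{(r)}$, closing the induction.

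The main obstacle is not conceptual but notational: one must keep straight the bookkeeping between the direct summand index $k$ in the flag bundle of Theorem \ref{limitflagbott}, the row index of $P_l^{(r)}$ in Proposition \ref{prop}, and the shift of indices when applying the character-to-line-bundle dictionary to the shorter sequence $\mathscr{I}'$. Once this dictionary is set up, the identification of matrix entries is immediate from the formula recorded after Theorem \ref{limitflagbott}.
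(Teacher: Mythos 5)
The paper itself offers no proof of Theorem \ref{fbs2fbm}: it is imported verbatim from \cite[Theorem 4.10]{fls}, just as Theorem \ref{limitflagbott} and the character-to-vector dictionary recorded in the paragraph following it are quoted from the same source. Your induction --- base case $F^0_{(I_1)}\cong P_{I_1}/B\cong Flag(\bc^{n_1+1})$ since $L_{I_1}$ has type $A_{n_1}$, inductive step via Theorem \ref{limitflagbott}, identification of each summand $\mathcal{L}^0_{\mathscr{I}',0,\ldots,0,\chi_j}$ through the dictionary, and then reading off the new matrices via Proposition \ref{prop} --- is exactly the route taken in the cited source, and your bookkeeping (the collapse of the partial character sums to $\chi_j$ because all earlier characters vanish, the zero last row corresponding to the trivial summand $1_{\bc}$, and the zero last column $a_l(n_l+1)=0$) is correct and matches the stated entries of $Q^{(r)}_l$.

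One caveat is worth making explicit. The dictionary you lean on is itself an unproven imported statement, and its formulation (``the line bundle on the flag Bott manifold $F^{0}_{\mathscr{I}}$ corresponding to the vector $(\mathbf{a}_1,\ldots,\mathbf{a}_r)$'') presupposes that $F^{0}_{\mathscr{I}}$ already carries the quotient description whose existence is the content of the theorem. You avoid outright circularity by invoking it only for the shorter sequence $\mathscr{I}'$, but note that the bare inductive hypothesis ``$F^0_{\mathscr{I}'}$ is determined by $(Q^{(j)}_l)_{1\le l<j\le r-1}$'' only provides \emph{some} tower isomorphism; to read off $Q^{(r)}_l$ you must also know how the specific bundles $\mathcal{L}^0_{\mathscr{I}',\chi_j}$ transport under that isomorphism. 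A fully self-contained argument would therefore run a joint induction, establishing at each stage both the flag Bott structure and the identification $\mathcal{L}^0_{\mathscr{I}',\chi_1,\ldots,\chi_{r-1}}\simeq \eta(\mathbf{a}_1,\ldots,\mathbf{a}_{r-1})$ (the latter by tracing the $B^{r-1}$-action on the fiber through the homomorphisms $\Psi^{(j)}_l$, in the spirit of the computation in Lemma \ref{line2}); otherwise the computational heart of the theorem is being cited rather than proved --- which is, to be fair, precisely how the paper itself treats it.
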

	\begin{exam}
		let $G=SL(4)$. Consider the sequence $\mathscr{I}=(\{1,2\}, \{1,2\})$. Hence $u_{1,1}=1,\, u_{1,2}=2,\, u_{2,1}=1,\, u_{2,2}=2$. Then the manifold  $F_\mathscr{I}^0$ is a $2$-stage flag Bott manifold which is determined by a matrix $$Q^{(2)}_1:=\begin{bmatrix} \langle\alpha_1+\alpha_2, \alpha_1^\vee+\alpha_2^\vee\rangle &  \langle\alpha_1+\alpha_2, \alpha_2^\vee\rangle & 0\\
			\langle\alpha_2, \alpha_1^\vee+\alpha_2^\vee\rangle &  \langle\alpha_2, \alpha_2^\vee\rangle & 0\\  0&0&0\end{bmatrix}= 
		\begin{bmatrix} 2 &  1 & 0\\1 & 2 & 0\\ 0&0&0\end{bmatrix}$$
	\end{exam}

	\begin{coro}\label{bs}
		Suppose that the flag Bott-Samelson variety is a Bott-Samelson
		variety ~i.e~ $|I_k|=1~~\forall~~ k=1,2,...,r$, so that
		$n_1=n_2=\cdots=n_r=1$.  Hence from Theorem
		\ref{fbs2fbm},
		$$Q_l^{(j)}=\begin{bmatrix}
			\langle\alpha_{u_{j,1}}\,,\,\alpha^\vee_{u_{l,1}}\rangle & 0\\ 0 &
			0
		\end{bmatrix} \text{ for each $1\leq l<j\leq r$}$$ (see
		\cite[Section 3.7]{gk}).
	\end{coro}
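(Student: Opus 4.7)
The plan is to obtain this corollary as a direct specialization of Theorem \ref{fbs2fbm} to the case $n_1 = n_2 = \cdots = n_r = 1$, so the argument is essentially a careful bookkeeping of what the matrix $Q_l^{(j)}$ of Theorem \ref{fbs2fbm} becomes in this setting. Since each $I_k$ is a singleton, each $Q_l^{(j)}$ is a $(n_j+1)\times(n_l+1) = 2\times 2$ matrix, so I only need to compute its four entries.

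First I would enumerate the four positions $(p,q)\in\{1,2\}\times\{1,2\}$. The formula from Theorem \ref{fbs2fbm} declares $Q_l^{(j)}(p,q) = 0$ whenever $p > n_j$ or $q > n_l$, so with $n_j = n_l = 1$ this immediately forces $Q_l^{(j)}(1,2) = Q_l^{(j)}(2,1) = Q_l^{(j)}(2,2) = 0$. The only nonzero entry is therefore $Q_l^{(j)}(1,1)$, which by the formula equals
\[
\langle \alpha_{u_{j,1}} + \cdots + \alpha_{u_{j,n_j}},\, \alpha^\vee_{u_{l,1}} + \cdots + \alpha^\vee_{u_{l,n_l}}\rangle.
\]
Since $n_j = n_l = 1$, each of these sums collapses to a single term, giving $Q_l^{(j)}(1,1) = \langle \alpha_{u_{j,1}}, \alpha^\vee_{u_{l,1}}\rangle$. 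Assembling these four entries yields precisely the matrix claimed in the corollary.

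There is essentially no obstacle here: once one correctly reads the convention in Theorem \ref{fbs2fbm} that the $(p,q)$ entry is zero outside the range $1\le p\le n_j$, $1\le q\le n_l$, the statement is a one-line substitution. The only minor point worth flagging is that the third row/column of zeros comes from this convention rather than from any cancellation in the pairing, and that the comparison with \cite[Section 3.7]{gk} (the classical description of the Bott tower associated to a Bott--Samelson variety) is what justifies the parenthetical reference at the end of the statement.
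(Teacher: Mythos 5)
Your proposal is correct and coincides with the paper's (implicit) argument: the corollary is stated as an immediate specialization of Theorem \ref{fbs2fbm}, with the zero entries coming from the convention $Q_l^{(j)}(p,q)=0$ outside $1\le p\le n_j$, $1\le q\le n_l$, and the $(1,1)$ entry collapsing to $\langle\alpha_{u_{j,1}},\alpha^\vee_{u_{l,1}}\rangle$ since $n_j=n_l=1$. Nothing more is needed.
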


	\section{Topological K-theory of Flag Bott manifolds}\label{kth}
	
	Let $E$ be a complex vector bundle of rank $n$ over a compact manifold
	$X$. Then the exterior power $\Lambda^i(E)$ is also a complex vector
	bundle over $X$ with fiber $\Lambda^i(E)_x=\Lambda^i(E_x)$ for each
	$x\in X$. Moreover, if $E$ is a direct sum of $n$ complex line bundles
	$E\simeq L_1\oplus L_2\oplus \cdots \oplus L_n$ over $X$, then we have
	the isomorphisms
	$\Lambda^1(E)\simeq L_1\oplus L_2\oplus \cdots \oplus L_n$,
	$\Lambda^2(E) \simeq \bigoplus_{i<j}L_i\otimes L_j$ etc. In
	particular, for each $\displaystyle 1\leq k\leq n$
	$$\Lambda^k(E)\simeq\bigoplus_{i_1<i_2<\ldots<i_k}L_{i_1}\otimes
	L_{i_2}\otimes \cdots \otimes L_{i_k}$$ and the class $[\Lambda^k(E)]$
	in $K(X)$ can be interpreted as $e_k([L_1]\,,[L_2],\ldots,[L_n])$
	where $e_k$ denotes the $k$-th elementary symmetric polynomial.
	
	We recall from the definition of a flag bundle $\pi : Flag(E)\to X$
	associated to an $n$ dimensional complex vector bundle $E$ over a
	compact manifold $X$, that we have a sequence of canonical
	sub-bundles $0\subset E_1\subset E_2\subset\cdots\subset E_n=\pi^*(E)$
	such that the successive quotients $E_i/E_{i-1}$ are well defined line
	bundles over $Flag(E)$ with
	$\displaystyle\bigoplus_{i=1}^n(E_i/E_{i-1})\simeq \pi^*E$. We shall
	denote the class of $(E_i/E_{i-1})$ in $K^*(Flag(E))$ by $h_i$.
	
	We recall the following classical theorem describing $K^*(Flag(E))$ as
	$K^*(X)$-algebra.

	\begin{guess}\cite[Chapter IV : Theorem 3.6]{k}\label{kar}
		Let
		$$\phi : K^*(X)[x^{\pm 1}_1,\ldots, x^{\pm 1}_n]\to K^*(Flag(E))$$
		be the $K^*(X)$- algebra homomorphism sending $x_i$ to $h_i$. Then
		$\phi$ is surjective and its kernel is the ideal $I$ in the Laurent
		polynomial ring $K^*(X)[x^{\pm 1}_1,\ldots, x^{\pm1}_n]$ generated
		by the elements \be\label{elementary} e_i(x_1,\ldots,
		x_n)-[\Lambda^i(E)],\ee where $e_i$ is the $i$-th elementary
		symmetric polynomial in the $x_i$'s. Hence, $\phi$ induces an
		isomorphism
		$$K^*(X)[x^{\pm1}_1,\ldots, x^{\pm1}_n]/I\simeq K^*(Flag(E))$$
	\end{guess}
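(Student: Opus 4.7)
The plan is to prove Theorem \ref{kar} by induction on the rank $n$ of $E$, with the $K$-theoretic projective bundle formula as the engine at each stage. The underlying geometric fact is that $Flag(E)$ admits a tower decomposition into iterated projective bundles, exactly parallel to the tower structure of a flag Bott manifold in Section \ref{fbm}, and this matches perfectly the iterative character of the claimed presentation.

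First I would observe that $\pi : Flag(E) \to X$ factors through the projective bundle $\pi_1 : \mathbb{P}(E) \to X$: on $\mathbb{P}(E)$ sits the tautological line subbundle $E_1 \subset \pi_1^{*}E$, and $Flag(E) = Flag(\pi_1^{*}E/E_1)$ is the flag bundle of a rank $n-1$ bundle over $\mathbb{P}(E)$. The base case $n = 1$ is immediate: $Flag(E) = X$, $h_1 = [E]$, and the relation $x_1 - [\Lambda^1 E] = x_1 - [E]$ collapses $K^{*}(X)[x_1^{\pm 1}]$ down to $K^{*}(X)$, using that the line bundle class $[E]$ is a unit.

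For the inductive step, I would invoke the classical $K$-theoretic projective bundle formula,
$$K^{*}(\mathbb{P}(E)) \;\cong\; K^{*}(X)[\xi^{\pm 1}]\Big/\Big(\textstyle\sum_{i=0}^{n} (-1)^{i}\,[\Lambda^{i} E]\,\xi^{n-i}\Big),$$
where $\xi := [E_1]$. Applying the inductive hypothesis to the rank $n-1$ bundle $\pi_1^{*}E/E_1$ over $\mathbb{P}(E)$ then gives a presentation of $K^{*}(Flag(E))$ as a $K^{*}(\mathbb{P}(E))$-algebra on generators $x_2,\ldots,x_n$, modulo the elementary symmetric relations $e_k(x_2,\ldots,x_n) = [\Lambda^{k}(\pi_1^{*}E/E_1)]$ for $1 \leq k \leq n-1$. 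Surjectivity of $\phi$ is automatic at each stage from the surjectivity in the projective bundle formula, so the content lies in identifying the relations.

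The main obstacle is showing that combining these two sets of relations, after setting $x_1 := \xi$, is equivalent to the single set $\{e_k(x_1,\ldots,x_n) = [\Lambda^{k} E]\}_{k=1}^{n}$. The key algebraic tool is the short exact sequence $0 \to E_1 \to \pi_1^{*}E \to \pi_1^{*}E/E_1 \to 0$, which yields in $K$-theory the multiplicativity of the total lambda class,
$$\sum_{i=0}^{n} [\Lambda^{i} E]\,t^{i} \;=\; (1 + \xi t)\,\sum_{j=0}^{n-1} [\Lambda^{j}(\pi_1^{*}E/E_1)]\,t^{j}.$$
Comparing coefficients of $t^{k}$ converts the inductive relations together with the projective bundle relation into precisely the symmetric relations $e_k(x_1,\ldots,x_n) = [\Lambda^{k} E]$. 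What remains is routine symmetric-function bookkeeping, together with a check (via a Leray--Hirsch style freeness argument at each level of the projective-bundle tower) that no extra relations are forced and the induced quotient map is an isomorphism.
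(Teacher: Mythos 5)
Your proposal is a correct outline, but note that the paper does not actually prove this statement: Theorem \ref{kar} is quoted from Karoubi \cite[Chapter IV, Theorem 3.6]{k}, and the only argument the authors supply is the remark immediately following it, which justifies replacing Karoubi's polynomial-ring presentation by the Laurent one (the classes $h_i$ are units, and $1-h_i$ is nilpotent by \cite[Section 2.6]{athir}, so inverting the variables does not change the quotient). What you have written is essentially the standard proof of the cited classical theorem: factor $Flag(E)\to X$ through $\mathbb{P}(E)$, note $Flag(E)=Flag(\pi_1^*E/E_1)$, and induct using the $K$-theoretic projective bundle formula; your form of the Grothendieck relation $\sum_i(-1)^i[\Lambda^iE]\,\xi^{\,n-i}=0$ is the one compatible with the tautological subbundle convention, as the split case confirms. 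Two points deserve more care than your sketch gives them. First, the ``routine symmetric-function bookkeeping'' is where the actual content sits: you must rewrite the inductive relations $e_k(x_2,\ldots,x_n)=[\Lambda^k(\pi_1^*E/E_1)]$ with coefficients expressed as polynomials in $x_1$ via $[\Lambda^k(\pi_1^*E/E_1)]=\sum_{i\le k}(-x_1)^i[\Lambda^{k-i}E]$, and then verify the two-way inclusion of ideals (the identity $e_k(x_1,\ldots,x_n)=e_k(x_2,\ldots,x_n)+x_1e_{k-1}(x_2,\ldots,x_n)$ together with the Grothendieck relation does yield all relations $e_k(x_1,\ldots,x_n)=[\Lambda^kE]$, including $k=n$, and conversely). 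Second, if you do carry out that ideal comparison in both directions, your closing Leray--Hirsch step is superfluous; if instead you want to rely on it, you must also prove that the presented quotient is a free $K^*(X)$-module of rank $n!$ (standard, via the monomial basis $x_1^{a_1}\cdots x_n^{a_n}$ with $0\le a_i\le n-i$, using that $[\Lambda^nE]$ is a unit), so that a surjection of free modules of equal finite rank is forced to be an isomorphism. With these points made explicit your argument gives an independent proof of the quoted theorem, directly in the Laurent form the paper needs, whereas the paper itself only needs the citation plus the localization remark.
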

	
	\brem In \cite{k} the ring $K^*(Flag(E))$ is expressed as the quotient
	of the polynomial ring $K^*(X)[x_1,\ldots, x_n]$ by the ideal $I'$
	generated by the elements \eqref{elementary}. Now, by \cite[Section
	2.6]{athir}, we note that $(1-[L_i])$ are nilpotent elements in
	$K^*(Flag(E))$ (since $X$ is finite dimensional being a compact
	manifold, the total space $Flag(E)$ is also finite dimensional). It
	follows that in $K^*(Flag(E))$, $[L_i]^{-1}$ can itself be expressed
	as a polynomial in $(1-[L_i])$. For this reason, we can as well
	express $K^*(Flag(E))$ as a quotient of the Laurent polynomial ring
	$K^*(X)[x^{\pm 1}_1,\ldots, x^{\pm1}_n]$ by the ideal $I$ generated by
	the elements \eqref{elementary}, where
	$K^*(X)[x^{\pm 1}_1,\ldots, x^{\pm1}_n]$ can be identified with the
	localization $K^*(X)[x_1,\ldots, x_n]_{(x_1\cdots x_n)}$ of
	$K^*(X)[x_1,\ldots, x_n]$ and $I$ is the extension of the ideal $I'$
	in $K^*(X)[x_1,\ldots, x_n]$. This is with a view to apply the Theorem
	\ref{kar} iteratively for the flag Bott manifolds. For, when the base
	space $X$ itself is a flag manifold, $[\Lambda^i(E)]$ may involve
	classes of negative powers of the tautological line bundles. Thus
	considering $K^*(Flag(E))$ as a quotient of the Laurent polynomial
	ring at each stage will facilitate our arguments as well as simplify
	the presentation for the $K$-ring of flag Bott manifolds. \erem
	
	\subsection{$K$-ring of flag Bott manifolds}\label{topk}
	Recall from Proposition \ref{prop} that for every $j> 1$,
	$B_j^{quo}= Flag(\eta^{(j)})$ is a flag bundle over $B_{j-1}^{quo}$
	where
	\begin{equation}\label{a}
		\eta^{(j)}:=\bigoplus_{k=1}^{n_j+1}\eta({\bf{v}}^{(j)}_{k,1},{\bf{v}}^{(j)}_{k,2},\ldots,{\bf{v}}^{(j)}_{k,j-1})\end{equation} 
	is a direct sum of $n_j+1$ line bundles over $B_{j-1}^{quo}$ and
	${\bf{v}}^{(j)}_{k,l}$  is the $k$-th row vector of the matrix
	$P^{(j)}_l$ for each $1\leq l\leq j-1$ for the collection of matrices
	$$\mathfrak{P}:=(P_l^{(j)})_{1\leq l<j\leq
		m}\in \prod_{1\leq l<j\leq m}M_{n_j+1,n_l+1}(\bz).$$ 
	
	\bth\label{inductionkring} We have an $K^*(B_{j-1}^{quo})$- algebra
	isomorphism :
	$$\frac{K^*(B_{j-1}^{quo})\,[y^{\pm1}_{j,1},\,
		y^{\pm1}_{j,2},\ldots,\,y^{\pm1}_{j,n_j+1}]}{\mathcal {I}_j}\,\simeq\,K^*(Flag(\eta^{(j)}))=\,
	K^*(B_j^{quo})$$ given by
	$y_{j,k} + \mathcal{I}_j\mapsto [\mathcal{L}_{j,k}]$ where
	$\mathcal{I}_j$ is the ideal in the Laurent polynomial ring
	$K^*(B_{j-1}^{quo})\,[y^{\pm 1}_{j,1},\,
	y^{\pm1}_{j,2},\ldots,\,y^{\pm1}_{j,n_j+1}]$ generated by the elements
	$$e_r(y_{j,1},\, y_{j,2},\ldots,\,y_{j,n_j+1})-[\Lambda^r(\eta^{(j)})]
	;\text{ for each $1\leq r\leq n_j+1$}.$$\eeth \begin{proof} The theorem
		follows from Theorem \ref{kar}.\end{proof}

	Let ${\bf{y}}_{j}$ denote the collection of variables
	$\{y^{\pm1}_{j,1},\,y^{\pm 1}_{j,2},\ldots,y^{\pm1}_{j,n_j+1}\}$ for every $1\leq j\leq
	m$.
	
	Let $\mathcal{R}:=\bz[{\bf{y}}_{j}\,|\, 1\leq j\leq m]$ denote the
	Laurent polynomial ring in the variables
	$y^{\pm1}_{j,1},\,y^{\pm1}_{j,2},\ldots,y^{\pm1}_{j,n_j+1}$ for
	$1\leq j\leq m$.
	
	Let $e_{r}(\bf{y}_{j})\in \mathcal{R}$ denote the $r$th elementary
	symmetric function in $y_{j,1},\,y_{j,2},\ldots,y_{j,n_j+1}$.
	
	Let $P_l^{(j)}=(P_l^{(j)}(r,s))\in M_{n_j+1,n_l+1}(\bz)$ for
	$1\leq r\leq n_j+1$, $1\leq s\leq n_l+1$.
	
	Let $\mathcal{I}_1$ denote the ideal in $\mathcal{R}$ generated by the
	polynomials
	$$e_r({\bf{y}}_{1})- \begin{pmatrix} n_1+1\\r\end{pmatrix}, \text{ for
		every $1\leq r\leq n_1+1$}$$ and let $\mathcal{I}_j$ denote the
	ideal in $\mathcal{R}$ generated by the polynomials
	\[e_r({\bf{y}}_{j})-e_r\big(\prod_{s=1}^{j-1}(\prod_{i=1}^{n_s+1}y_{s,i}^{P^{(j)}_s(k,i)})\,|
	1\leq k\leq n_j+1\big)\] for every $2\leq j\leq m$ and
	$1\leq r\leq n_j+1$. 
	
	Let $\mathcal{I}:=\mathcal{I}_1+\cdots+\mathcal{I}_m$ which is the
	smallest ideal in $\mathcal{R}$ containing
	$\mathcal{I}_1,\ldots, \mathcal{I}_m$.
	
	\begin{guess}\label{mainth}
		Suppose that $B_\bullet =\{B_j\,|\,0\leq j\leq m\}$ is an $m$- stage
		flag Bott manifold of Lie type $A$, determined by a set of integer
		matrices
		$\mathfrak{P}:=(P_l^{(j)})_{1\leq l<j\leq m}\in \prod_{1\leq l<j\leq
			m}M_{n_j+1,n_l+1}(\bz)$. The map from $\mathcal{R}$ to $K^*(B_m)$
		which sends $y_{m,k}$ to $[\mathcal{L}_{m,k}]$ for
		$1\leq k\leq n_m+1$ and $y_{l,k}$ to
		\[[p_m^*\,\circ\, \cdots\,\circ\, p^*_{l+1}(\mathcal{L}_{l,k})]\] for
		$1\leq k\leq n_l+1$ and $1\leq l\leq m-1$,  induces
		an isomorphism $$\mathcal{R}/\mathcal{I}\simeq K^*(B_m).$$
	\end{guess}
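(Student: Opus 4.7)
The plan is to prove the theorem by induction on the number $m$ of stages of the flag Bott tower, using Theorem \ref{inductionkring} as the inductive engine.

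\emph{Base case.} For $m=1$, $B_1=Flag(n_1+1)$ lies over a point, so $\eta^{(1)}$ is the trivial bundle $\mathbb{C}^{n_1+1}$ and $[\Lambda^r(\eta^{(1)})]=\binom{n_1+1}{r}$. Theorem \ref{kar} (equivalently Theorem \ref{inductionkring} with trivial base) then directly yields $\mathcal{R}_1/\mathcal{I}_1\simeq K^*(B_1)$, where $\mathcal{R}_1,\mathcal{I}_1$ denote the obvious truncations to the first stage.

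\emph{Inductive step.} Assuming the isomorphism $\mathcal{R}_{m-1}/\mathcal{I}_{m-1}\simeq K^*(B_{m-1}^{quo})$ via $y_{l,k}\mapsto [p_{m-1}^*\circ\cdots\circ p_{l+1}^*(\mathcal{L}_{l,k})]$, Theorem \ref{inductionkring} gives
$$K^*(B_m^{quo})\simeq K^*(B_{m-1}^{quo})[y_{m,1}^{\pm 1},\ldots,y_{m,n_m+1}^{\pm 1}]\Big/\big\langle e_r({\bf y}_m)-[\Lambda^r(\eta^{(m)})]\mid 1\le r\le n_m+1\big\rangle,$$
with $y_{m,k}\mapsto [\mathcal{L}_{m,k}]$. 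It therefore suffices to identify $[\Lambda^r(\eta^{(m)})]\in K^*(B_{m-1}^{quo})$ with the element $e_r\!\left(\prod_{s=1}^{m-1}\prod_{i=1}^{n_s+1} y_{s,i}^{P^{(m)}_s(k,i)}\mid 1\le k\le n_m+1\right)$ of $\mathcal{R}_{m-1}/\mathcal{I}_{m-1}$ appearing in the definition of $\mathcal{I}_m$; pulling the resulting presentation up to $B_m^{quo}$ along the $p_m^*$'s and then adjoining the generators $y_{m,k}$ with relations $e_r({\bf y}_m)-[\Lambda^r(\eta^{(m)})]$ reproduces $\mathcal{R}/\mathcal{I}$.

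\emph{Key identification.} By Proposition \ref{prop}, $\eta^{(m)}=\bigoplus_{k=1}^{n_m+1}\eta({\bf v}^{(m)}_{k,1},\ldots,{\bf v}^{(m)}_{k,m-1})$, with ${\bf v}^{(m)}_{k,l}$ the $k$-th row of $P^{(m)}_l$. Iteratively applying Lemma \ref{tensor} decomposes each summand as $\bigotimes_{l=1}^{m-1}\bigotimes_{i=1}^{n_l+1}\eta({\bf 0},\ldots,{\bf e}_i,\ldots,{\bf 0})^{\otimes P^{(m)}_l(k,i)}$, with ${\bf e}_i$ placed in slot $l$, and Corollary \ref{cor} identifies each factor with $p_{m-1}^*\circ\cdots\circ p_{l+1}^*(\mathcal{L}_{l,i})$. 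Multiplicativity of line bundle classes in $K$-theory then yields $[\eta^{(m)}_k]=\prod_{l,i}[p_{m-1}^*\circ\cdots\circ p_{l+1}^*(\mathcal{L}_{l,i})]^{P^{(m)}_l(k,i)}$, and $[\Lambda^r(\eta^{(m)})]$ is the $r$-th elementary symmetric function in these $n_m+1$ classes, which under the inductive isomorphism is precisely the generator of $\mathcal{I}_m$.

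\emph{Main obstacle.} The principal technical hurdle is the monomial identification of $[\eta^{(m)}_k]$ above, which requires careful bookkeeping through iterated applications of Lemma \ref{tensor} and Corollary \ref{cor} to ensure the indexing by the matrix entries $P^{(m)}_l(k,i)$ is recovered exactly. Once this is in hand, splicing the inductive isomorphism with Theorem \ref{inductionkring} is formal; the Laurent polynomial formulation of Theorem \ref{kar} ensures that no extra relations are needed to invert the newly introduced $y_{m,k}$ at the top stage.
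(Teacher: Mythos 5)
Your proposal is correct and follows essentially the same route as the paper's proof: induction on $m$ with Theorem \ref{inductionkring} as the inductive engine, reducing to the identification of $[\Lambda^r(\eta^{(m)})]$ with the elementary symmetric polynomial in the monomials $\prod_{s,i} y_{s,i}^{P^{(m)}_s(k,i)}$ via Lemma \ref{line2}, Corollary \ref{cor} and repeated use of Lemma \ref{tensor}. The only cosmetic difference is that for the top slot $l=m-1$ the identification is Lemma \ref{line2} itself rather than Corollary \ref{cor}, which does not affect the argument.
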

	
	\begin{proof} We prove this by induction on $m$. The result is true
		for $m=1$ since $B_1$ is a flag manifold associated to direct sum of
		line bundles $\mathcal{L}_{1,k}$ for $1\leq k\leq n_1+1$. In this
		case we know have the classical presentation given
		by \[\mathbb{Z}[{\bf{y}}_{1}]/{\mathcal{I}_1}\] (see
		\cite[Proposition 2.7.13]{at}, \cite[Chapter IV : Theorem
		3.6]{k}). Now, we assume by induction that the result is true for
		$B_{m-1}$. Thus \be\label{isoind}K^*(B_{m-1})\simeq
		\mathbb{Z}[{\bf{y}}_{1},\ldots, {\bf{y}}_{m-1}]
		/{\mathcal{I}_1+\cdots+\mathcal{I}_{m-1}}\ee under the isomorphism
		which sends $y_{{m-1},k}$ to $[\mathcal{L}_{{m-1},k}]$ for
		$1\leq k\leq n_{m-1}+1$ and $y_{l,k}$ to
		$p_{m-1}^*\,\circ\, \cdots\,\circ\, p^*_{l+1}(\mathcal{L}_{l,k})$
		for $1\leq k\leq n_l+1$ and $1\leq l\leq m-2$.

		Now by Theorem \ref{inductionkring} and the induction hypothesis we
		get that $K^*(B_m)$ is isomorphic
		to
		\[\big ({\mathbb{Z}[{\bf{y}}_{1},\ldots,
			{\bf{y}}_{m-1}]/{\mathcal{I}_1+\cdots+\mathcal{I}_{m-1}}}\big)[{\bf{y}}_{m}]\] modulo the
		ideal generated by the elements
		\[e_r(y_{m,1},\, y_{m,2},\ldots,\,y_{m,n_m+1})-[\Lambda^r(\eta^{(m)})]
		;\text{ for each $1\leq r\leq n_m+1$}.\] The theorem will follow if
		we show that
		\[e_r\big(\prod_{s=1}^{m-1}(\prod_{i=1}^{n_s+1}y_{s,i}^{P^{(m)}_s(k,i)})
		| 1\leq k\leq n_m+1\big).\] maps to $[\Lambda^r(\eta^{(m)})]$ under
		the isomorphism \eqref{isoind}. Thus it suffices to show that
		$\prod_{s=1}^{m-1}(\prod_{i=1}^{n_s+1}y_{s,i}^{P^{(m)}_s(k,i)})$
		maps to $\eta({\bf v}^{(m)}_{k,1},\ldots, {\bf v}^{(m)}_{k, m-1})$ for
		$1\leq k\leq n_m+1$. Since $y_{m-1,k}$ maps to
		$[\mathcal{L}_{{m-1},k}]$ for $1\leq k\leq n_{m-1}+1$ and $y_{s,i}$
		maps to
		$p_{m-1}^*\,\circ\, \cdots\,\circ\, p^*_{s+1}(\mathcal{L}_{s,i})$ for
		$1\leq s\leq m-2$ and $1\leq i\leq n_s+1$, it further suffices to show
		that
		\[\eta({\bf v}^{(m)}_{k,1},\ldots, {\bf v}^{(m)}_{k, m-1})\] is isomorphic to
		\[\displaystyle \bigotimes_{s=1}^{m-2} \bigotimes_{i=1}^{n_s+1}
		(p_{m-1}^*\,\circ\, \cdots\,\circ\,
		p^*_{s+1}(\mathcal{L}_{s,i}))^{P^{(m)}_s(k,i)}\bigotimes_{i=1}^{n_{m-1}+1} \mathcal{L}_{m-1,i}^{P^{(m)}_{m-1}(k,i)} .\]

		We recall that $\mathcal{L}_{m-1, j}$ is isomorphic to the line bundle
		$\eta({\bf 0}, \ldots, {\bf 0}, {\bf e}_j)$ where
		${\bf e}_j\in \mathbb{Z}^{n_{m-1}+1}$ has $1$ at the $j$-th place and
		$0$ everywhere else and
		$p_{m-1}^*\,\circ\, \cdots\,\circ\, p^*_{s+1} (\mathcal{L}_{s,i})$ is
		isomorphic to the line bundle
		$\eta({\bf 0},\ldots, {\bf e}_i,\ldots, {\bf 0})$ where
		${\bf e}_i\in \mathbb{Z}^{n_s+1}$ has $1$ at the $i$th place and $0$
		everywhere else. Also ${\bf v}^{(m)}_{k,s}\in \mathbb{Z}^{n_s+1}$ is the
		$k$th row vector of the matrix $P^{(m)}_{s}$. Thus
		${\bf v}^{(m)}_{k,s}=\sum_{i=1}^{n_s+1} P^{(m)}_s(k,i)\cdot {\bf
			e}_i$. This implies that
		\[({\bf v}^{(m)}_{k,s})_{s=1}^{m-1}=(\sum_{i=1}^{n_s+1}
		P^{(m)}_s(k,i)\cdot {\bf e}_i )_{s=1}^{m-1}.\] Now, the claim
		follows by repeated application of Lemma \ref{tensor}.  \end{proof}

	\begin{exam}
		We now determine the presentation of the $K$ ring for the Example \ref{ex} in terms of generators and relations.\\
		\noindent
		$K^*(B_0)\cong\bz$\\
		$K^*(B_1)\cong \bz[y_{1,1}^{\pm 1},\, y_{1,2}^{\pm
			1},\,y_{1,3}^{\pm 1}]\,/\,{\mathcal{I}_1}$.\\
		$K^*(B_2)\cong\bz[y_{1,1}^{\pm 1},\, y_{1,2}^{\pm 1},\,y_{1,3}^{\pm 1},\, y_{2,1}^{\pm 1},\,y_{2,2}^{\pm 1}]\,/\,\mathcal{I}_1 + \mathcal{I}_2$\\
		$K^*(B_3)\cong\bz[y_{1,1}^{\pm 1},\, y_{1,2}^{\pm 1},\,y_{1,3}^{\pm 1}, \,y_{2,1}^{\pm 1},\,y_{2,2}^{\pm 1},\, y_{3,1}^{\pm 1},\, y_{3,2}^{\pm 1}]/\mathcal{I}_1+\mathcal{I}_2+\mathcal{I}_3$\\
		\noindent
		where, $\mathcal{I}_1$ is generated by the elements $$e_r(y_{1,1},\, y_{1,2},\,y_{1,3})- \begin{pmatrix} 3\\r\end{pmatrix}, \text{ for each $1\leq r\leq 3$.}$$
		
		$\mathcal{I}_2$ is generated by the elements $$e_r(y_{2,1},\,y_{2,2})-e_r(y_{1,1}^{a_1}\,y_{1,2}^{a_2}\, y_{1,3}^{a_3}, \, y_{1,1}^{b_1}\, y_{1,2}^{b_2}\, y_{1,3}^{b_3})$$ for each $1\leq r\leq 2$.
		
		$\mathcal{I}_3$ is generated by the elements $$e_r(y_{3,1},\,y_{3,2})-e_r(y_{1,1}^{c_1}\,y_{1,2}^{c_2}\, y_{1,3}^{c_3}\,y_{2,1}^{f_1}\,y_{2,2}^{f_2}\,, \, y_{1,1}^{d_1}\, y_{1,2}^{d_2}\, y_{1,3}^{d_3}\,y_{2,1}^{0}\,y_{2,2}^{0})$$  for each $1\leq r\leq 2$.
	\end{exam}

	\subsection{$K$-ring of flag Bott-Samelson
		varieties}\label{gr}
	
	Suppose that  $\mathscr{I}=(I_1,\ldots,I_r)$ be a sequence of subsets
	of $[n]$ such that the Levi subgroup $L_{I_k}$ of the parabolic
	subgroup $P_{I_k}$ has Lie type $A_{n_k}$ for all $1\leq k\leq
	r$. Take an enumeration $I_k=\{u_{k,1},\ldots,u_{k,n_k}\}$ which
	satisfies the conditions \eqref{con}.

	Let $\mathscr{K}^*(F_\mathscr{I})$ denote the algebraic $K$-ring of
	the flag Bott-Samelson variety $F_\mathscr{I}$.  Let
	$\mathcal{R}':=\bz[{\bf{y}}_{j}\,|\, 1\leq j\leq r]$ where
	${\bf{y}}_{j}$ denotes the collection of variables
	$\{y^{\pm 1}_{j,1},\,y^{\pm 1}_{j,2},\ldots,y^{\pm 1}_{j,n_j+1}\}$ for every
	$1\leq j\leq r$. Let $\mathcal{I}_1$ be the ideal in $\mathcal{R}'$
	generated by the polynomials $\displaystyle e_t({\bf{y}}_{1})- \begin{pmatrix} n_1+1\\t\end{pmatrix}$ for each $1\leq t\leq n_1+1$.
	And for each $2\leq j\leq r$, let $\mathcal{I}_j$ be the ideal in $\mathcal{R}'$
	generated by the polynomials
	\[e_t({\bf{y}}_{j})-e_t\big(\prod_{s=1}^{j-1}(\prod_{i=1}^{n_s+1}y_{s,i}^{Q^{(j)}_s(k,i)})\,|
	1\leq k\leq n_j+1\big) ~\mbox{for each}~ 1\leq t\leq n_j+1,\] where
	$\displaystyle Q_l^{(j)}(p,q)=\langle \alpha_{u_{j,p}}+\,.\,.\,.+
	\alpha_{u_{j,n_j}}\,,\, \alpha^\vee_{u_{l,q}}+\ldots+
	\alpha^\vee_{u_{l,n_l}}\rangle$ if $1\leq p\leq n_j$ and
	$1\leq q\leq n_l$, and $0$ otherwise. Let $\mathcal{I}'$ denote the
	ideal $\mathcal{I}_1+\cdots+\mathcal{I}_r$ in $\mathcal{R}'$

	\brem\label{higher K} Since the flag manifold has algebraic cell
	decomposition given by the Schubert cells the iterated flag bundles
	have cellular structure with cells only in even dimension. Thus it
	follows by \cite[Section 2.5]{athir} that $K^*(B_m)=K^0(B_m)$ i.e.
	$K^{-1}(B_m)=0$. This can alternately be seen by Leray Hirch theorem
	for $K$-theory (see \cite[Theorem 1.3, Chapter IV]{k}) or by the
	degeneracy of the Atiyah-Hirzebruch spectral sequence \cite[Section
	2.4]{athir}. Furthermore, since $F_\mathscr{I}$ and $F^0_\mathscr{I}$
	are diffeomorphic, ${K}^*(F_\mathscr{I})\simeq
	K^*(F^0_\mathscr{I})$. Thus it follows that
	${K}^*(F_\mathscr{I})=K^0(F_\mathscr{I})$.  \erem

	As an immediate corollary of Theorem \ref{mainth}, we have the
	following presentation of the topological $K^0$- ring (and hence the
	Grothendieck ring $\mathscr{K}^0$) of $F_\mathscr{I}$ in terms of
	generators and relations.
	
	\begin{coro}\label{kfbs} The map $\mathcal{R}'\lra
		\mathscr{K}^0(F_\mathscr{I})$ which sends $y_{j,k}$ to
		\[[p_{r}^*\,\circ\, \cdots\,\circ\,
		p^*_{j+1}(\mathcal{L}_{\mathscr{I},\omega_{u_{j,k}}-\omega_{u_{j,k-1}}})]~\mbox{for}~2\leq
		k\leq n_j,\]  $y_{j,1}$ to
		\[[p_{r}^*\,\circ\, \cdots\,\circ\,
		p^*_{j+1} (\mathcal{L}_{\mathscr{I},\omega_{u_{j,1}}})]~\mbox{and}\] 
		$y_{j,n_j+1}$ to
		\[[p_{r}^*\,\circ\, \cdots\,\circ\,
		p^*_{j+1}(\mathcal{L}_{\mathscr{I},-\omega_{u_{j,n_j}}})]\] for
		$1\leq j\leq r-1$, and $y_{r,k}$ to
		\[[\mathcal{L}_{\mathscr{I},\omega_{u_{r,k}}-\omega_{u_{r,k-1}}}]~\mbox{for}~2\leq
		k\leq n_r,\]  $y_{r,1}$ to
		\[[\mathcal{L}_{\mathscr{I},\omega_{u_{r,1}}}]~\mbox{and}\] 
		$y_{r,n_r+1}$ to
		\[[\mathcal{L}_{\mathscr{I},-\omega_{u_{r,n_r}}}]\] induces an
		isomorphism \be\label{iso}\mathscr{K}^0(F_\mathscr{I}) \simeq
		K^0(F_\mathscr{I})\simeq \mathcal{R}'/\mathcal{I}'.\ee
	\end{coro}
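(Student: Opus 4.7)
The plan is to assemble the corollary by combining Theorem \ref{mainth} applied to the associated flag Bott manifold with the diffeomorphism of complex structures and then transferring the result from topological to algebraic $K$-theory.

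First I would invoke Theorem \ref{fbs2fbm}, which identifies $F^0_\mathscr{I}$ as an $r$-stage flag Bott manifold of type $A$ determined by the sequence of matrices $\mathfrak{M}=(Q_l^{(j)})_{1\leq l<j\leq r}$. Applying Theorem \ref{mainth} to this flag Bott manifold yields directly an isomorphism
\[
\mathcal{R}'/\mathcal{I}' \;\simeq\; K^*(F^0_\mathscr{I})
\]
under which each variable $y_{j,k}$ is sent to the class of the appropriate pullback of the tautological line bundle $\mathcal{L}^0_{j,k}$ on the $j$-stage flag Bott manifold $F^0_{\mathscr{I}'}$ (where $\mathscr{I}'=(I_1,\ldots,I_j)$).

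Next, I would pass from $F^0_\mathscr{I}$ to $F_\mathscr{I}$ using Proposition \ref{diffeo}, which asserts that the family $\{F^t_\mathscr{I}\}_{t\in \bc}$ consists of pairwise diffeomorphic manifolds and that $F^1_\mathscr{I}=F_\mathscr{I}$. Since topological $K$-theory is a diffeomorphism invariant, this induces $K^*(F_\mathscr{I}) \simeq K^*(F^0_\mathscr{I})$. Under this identification, Remark \ref{corresalgtop} tells us that the tautological line bundle $\mathcal{L}^0_{j,k}$ on $F^0_{\mathscr{I}'}$ corresponds respectively to $\mathcal{L}_{\mathscr{I}',\omega_{u_{j,1}}}$, $\mathcal{L}_{\mathscr{I}',\omega_{u_{j,k}}-\omega_{u_{j,k-1}}}$ for $2\leq k\leq n_j$, and $\mathcal{L}_{\mathscr{I}',-\omega_{u_{j,n_j}}}$. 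Chasing these identifications through the pullbacks $p^*_{j+1}\circ\cdots\circ p^*_r$ produces the mapping of generators as stated in the corollary, and by Remark \ref{higher K} we have $K^*(F_\mathscr{I})=K^0(F_\mathscr{I})$, giving the second isomorphism in \eqref{iso}.

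Finally I would establish the identification $\mathscr{K}^0(F_\mathscr{I})\simeq K^0(F_\mathscr{I})$. Here I would invoke \cite[Lemma 4.2]{su} as in the introduction: the essential input is that all the generating line bundles appearing on the right-hand side are pullbacks of the line bundles $\mathcal{L}_{\mathscr{I},\chi}$, which are algebraic on the smooth projective variety $F_\mathscr{I}$ (compare Remark \ref{algebraicline bundles}). Since the topological $K$-ring is generated by classes of algebraic line bundles, the natural forgetful map $\mathscr{K}^0(F_\mathscr{I})\to K^0(F_\mathscr{I})$ is surjective, and \cite[Lemma 4.2]{su} upgrades this to an isomorphism. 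Composing with the isomorphism from the previous step yields \eqref{iso}.

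The bookkeeping step that requires most care is verifying that under the diffeomorphism $F_\mathscr{I}\simeq F^0_\mathscr{I}$ the $y_{j,k}$ for $1\leq j\leq r-1$ indeed correspond to the claimed pullbacks through $p_{j+1},\ldots,p_r$; this is essentially the content of Remark \ref{corresalgtop} combined with naturality of pullback under the projection maps in the flag Bott tower $F^0_\bullet$, but one must check that the projection maps of the flag Bott tower structure match the fibration $\pi_r\colon F_\mathscr{I}\to F_{\mathscr{I}'}$ under the deformation. The rest is a direct application of the already-established machinery.
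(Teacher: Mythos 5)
Your proposal is correct and follows essentially the same route as the paper: Theorem \ref{fbs2fbm} plus Theorem \ref{mainth} give the presentation of $K^0(F^0_\mathscr{I})$, Proposition \ref{diffeo} transfers it to $K^0(F_\mathscr{I})$ with Remark \ref{corresalgtop} matching the tautological generators to the algebraic line bundles $\mathcal{L}_{\mathscr{I},\chi}$, and \cite[Lemma 4.2]{su} identifies $\mathscr{K}^0(F_\mathscr{I})$ with $K^0(F_\mathscr{I})$ via the forgetful map. The only difference is the order in which the three ingredients are assembled, which is immaterial.
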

	
	\begin{proof}
		From Proposition \ref{diffeo}, $F_\mathscr{I}^0$ and
		$F_\mathscr{I}^1\,=\, F_\mathscr{I}$ are diffeomorphic. Hence,
		\be\label{e1} K^0(F_\mathscr{I}) \simeq K^0(F_\mathscr{I}^0).\ee
		From Theorem \ref{fbs2fbm}, the manifold $F_\mathscr{I}^0$ is an
		$r$-stage flag Bott manifold which is determined by a sequence of
		matrices
		$\displaystyle\mathfrak{M}:=(Q_l^{(j)})_{1\leq l<j\leq r}\in \prod_{1\leq l<j\leq
			r} M_{n_j+1,n_l+1}(\bz)$ in the sense of Definition
		\ref{defquo}. Hence, from Theorem \ref{mainth}, the map from
		$\mathcal{R}'$ to $K^0(F_\mathscr{I}^0)$ which takes $y_{j,k}$ to
		$[p_{r}^*\,\circ\, \cdots\,\circ\, p^*_{j+1}(\mathcal{L}_{j,k})]$
		for $1\leq j\leq r-1$ and $y_{r,k}$ to $[\mathcal{L}_{r,k}]$ induces
		an isomorphism of $\mathbb{Z}$-algebras \be\label{e2}
		\mathcal{R}'/\mathcal{I}'\simeq K^0(F_\mathscr{I}^0).\ee Moreover,
		by \cite[Lemma 4.2]{su} and Remark \ref{corresalgtop} it follows
		that the forgetful map \be\label{e3}f:\mathscr{K}^0(F_\mathscr{I})
		\to K^0(F_\mathscr{I})\ee is an isomorphism of rings where
		$f([\mathcal{L}_{\mathscr{I},{\omega_{u_{j,k}}}-\omega_{u_{j,k-1}}}])=[\mathcal{L}_{j,k}]$
		for $2\leq k\leq n_j$,
		$f([\mathcal{L}_{\mathscr{I},\omega_{u_{j,1}}}])=[\mathcal{L}_{j,1}]$
		and
		$f([\mathcal{L}_{\mathscr{I},-\omega_{u_{j,n_j}}}])=[\mathcal{L}_{j,n_{j+1}}]$. Now,
		the presentation \eqref{iso} for the Grothendieck ring of
		$F_\mathscr{I}$ follows by \eqref{e1}, \eqref{e2} and \eqref{e3}.
	\end{proof}

	Let
	$\mathcal{R}'':=\bz[y^{\pm1}_{j1}\,, y^{\pm1}_{j2}|\, 1\leq j\leq r]$
	and $\mathcal{I}''_1$ is the ideal in $\mathcal{R}''$ generated by the
	elements $y_{11}+y_{12}-2$ and $y_{11}\cdot y_{12}-1$. For each
	$2\leq j\leq r$ let $\mathcal{I}''_j$ be the ideal generated by the polynomials
	$\displaystyle y_{j1}+y_{j2}-\prod_{l=1}^{j-1}y_{l1}^{c_{jl}}-1$ and
	$\displaystyle y_{j1}\cdot y_{j2}-\prod_{l=1}^{j-1}y_{l1}^{c_{jl}}$, such that
	\[c_{jl}:=\langle \alpha_{u_{j,1}}\,,\,\alpha^\vee_{u_{l,1}}\rangle
	\text{ for each $l<j$.}\] Let $\mathcal{I}''$ denote the ideal
	$\mathcal{I}''_1+\cdots+\mathcal{I}''_{r}$ in $\mathcal{R}''$.

	\begin{coro}\label{kbs}
		Suppose that the flag Bott-Samelson variety $F_\mathscr{I}$ is a
		Bott-Samelson variety (cf. Corollary \ref{bs}). Then the map from
		$\mathcal{R}''$ to $K^0(F_\mathscr{I})$ which sends $y_{j1}$ to
		$[p_r^*\circ\cdots\circ p_{j+1}^*(\mathcal{L}_{\omega_j})]$ for
		$1\leq j\leq r-1$ and $y_{r1}$ to $[\mathcal{L}_{\omega_{r}}]$ induces
		the following isomorphism of $\mathbb{Z}$-algebras:
		\be\label{bspres} K^0(F_\mathscr{I})\simeq
		\mathscr{K}^0(F_\mathscr{I})\simeq \mathcal{R}''/\mathcal{I}''.\ee
		Using the relations in $\mathcal{I}''$ we can further simplify this
		to \be\label{bssimplepres}\mathscr{K}^*(F_\mathscr{I})\simeq
		\bz[y_{j1}^{\pm1}|\, 1\leq j\leq
		r]/\langle(y_{j1}-1)(y_{j1}-y_{11}^{c_{j1}}\cdots
		y_{j-1,1}^{c_{j,j-1}}); 1\leq j\leq r\rangle,\ee which matches with
		the presentation given by Sankaran and Uma (\cite[Theorem
		5.4]{su2}).
	\end{coro}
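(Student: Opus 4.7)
The plan is to specialize Corollary \ref{kfbs} to the case $|I_k|=1$ for every $k$, so that $n_1=\cdots=n_r=1$ and every $\mathcal{L}_{j,k}$ on the associated flag Bott manifold $F^0_\mathscr{I}$ has $k\in\{1,2\}$. First I would feed into the statement of Corollary \ref{kfbs} the special form of the structure matrices from Corollary \ref{bs}, namely
\[ Q^{(j)}_l = \begin{bmatrix} c_{jl} & 0\\ 0 & 0 \end{bmatrix},\qquad c_{jl}=\langle\alpha_{u_{j,1}},\alpha^\vee_{u_{l,1}}\rangle, \]
and observe that the row-products $\prod_{s<j}\prod_{i=1}^{2}y_{s,i}^{Q^{(j)}_s(k,i)}$ appearing in the generators of $\mathcal{I}_j$ collapse to $\prod_{s<j}y_{s,1}^{c_{js}}$ when $k=1$ and to $1$ when $k=2$. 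Since $n_j+1=2$, only $e_1$ and $e_2$ contribute, producing exactly the two generators $y_{j,1}+y_{j,2}-\prod_{s<j}y_{s,1}^{c_{js}}-1$ and $y_{j,1}y_{j,2}-\prod_{s<j}y_{s,1}^{c_{js}}$ of $\mathcal{I}''_j$ (with the convention $j=1$ giving $y_{1,1}+y_{1,2}-2$ and $y_{1,1}y_{1,2}-1$). Thus the isomorphism of Corollary \ref{kfbs} becomes the first isomorphism \eqref{bspres}.

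For the simplified presentation \eqref{bssimplepres}, the strategy is to eliminate each $y_{j,2}$ using the linear relation from $\mathcal{I}''_j$. Solving the first generator gives $y_{j,2}=\prod_{s<j}y_{s,1}^{c_{js}}+1-y_{j,1}$; plugging this into the second generator and regrouping yields
\[ y_{j,1}\!\left(\prod_{s<j}y_{s,1}^{c_{js}}+1-y_{j,1}\right)-\prod_{s<j}y_{s,1}^{c_{js}} = -(y_{j,1}-1)\!\left(y_{j,1}-\prod_{s<j}y_{s,1}^{c_{js}}\right). \]
A routine induction on $j$ (eliminating $y_{1,2},y_{2,2},\ldots,y_{r,2}$ one stage at a time) then identifies $\mathcal{R}''/\mathcal{I}''$ with $\bz[y_{j,1}^{\pm1}\mid 1\leq j\leq r]$ modulo the single relation $(y_{j,1}-1)(y_{j,1}-y_{1,1}^{c_{j1}}\cdots y_{j-1,1}^{c_{j,j-1}})$ for each $j$. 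This is precisely the Sankaran--Uma presentation of \cite[Theorem 5.4]{su2}.

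There is essentially no analytic obstacle, since all the hard work has been done in Theorem \ref{mainth} and Corollary \ref{kfbs}; the only point requiring care is the algebraic elimination. One must check that $y_{j,2}$ really can be removed as a generator without losing information: this is clean because $y_{j,2}$ occurs with coefficient $1$ in the first relation of $\mathcal{I}''_j$ and this relation is linear in $y_{j,2}$, so a formal change of generators over $\bz[y_{s,1}^{\pm1},y_{s,2}^{\pm1}\mid s<j]$ replaces the two generators of $\mathcal{I}''_j$ by the single equation above together with an explicit formula for $y_{j,2}$. One subtlety to handle cleanly is invertibility of $y_{j,2}$ in the Laurent ring, which follows a posteriori from the factorization, since both factors $(y_{j,1}-1)$ and $(y_{j,1}-\prod_{s<j}y_{s,1}^{c_{js}})$ are annihilated by the nilpotent argument of Remark surrounding Theorem \ref{kar}, keeping everything within the Laurent polynomial framework.
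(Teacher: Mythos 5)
Your proposal is correct and follows essentially the same route as the paper: specialize the general presentation (Theorem \ref{mainth}/Corollary \ref{kfbs}) using the matrices of Corollary \ref{bs} to get \eqref{bspres}, then eliminate the variables $y_{j,2}$ from the two relations in $\mathcal{I}''_j$ to obtain the quadratic relations of \eqref{bssimplepres}. The only cosmetic difference is that you solve for $y_{j,2}$ from the linear relation while the paper uses the product relation $y_{j,2}=y_{j,1}^{-1}\prod_{l<j}y_{l,1}^{c_{jl}}$ (which makes the invertibility of the image of $y_{j,2}$ immediate, so your appeal to the nilpotence remark is unnecessary --- invertibility already follows from the relations themselves).
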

	\begin{proof}
		The isomorphism (\ref{bspres}) directly follows from Theorem
		\ref{mainth} and Corollary \ref{bs}.
		
		Now, from the relations in $\mathcal{I}_1''$ we get
		$y_{12}=y_{11}^{-1}$ and from the relations in $\mathcal{I}_j''$ we
		get
		$\displaystyle
		y_{j2}=y_{j1}^{-1}(\prod_{l=1}^{j-1}y_{l1}^{c_{jl}})$. Moreover,
		$y_{j1}+y_{j2}=\prod_{l=1}^{j-1}y_{l1}^{c_{jl}}+1$ for
		$1\leq j\leq r$ (for $j=1$ the right hand side is $2$ so this holds
		by $\mathcal{I}_1''$). This implies that
		\[y_{j1}(\prod_{l=1}^{j-1}y_{l1}^{c_{jl}}+1-y_{j1})-(\prod_{l=1}^{j-1}y_{l1}^{c_{jl}})=0.\]
		This simplifies to
		\[ (y_{j1}-1)(\prod_{l=1}^{j-1}y_{l1}^{c_{jl}})-y_{j1}(y_{j1}-1)=0\]
		which further simplifies to
		\[(y_{j1}-1) ( \prod_{l=1}^{j-1}y_{l1}^{c_{jl}}-y_{j1})=0.\] Thus
		we can reduce the presentation to the simpler form
		\eqref{bssimplepres}.  
	\end{proof}

	{\it Acknowledgement:} The authors are very grateful to
	Prof. P. Sankaran for reading the earlier version of the manuscript
	and for valuable comments and suggestions. The authors are very
	grateful to the referee for a careful reading of the manuscript and
	for valuable comments and suggestions. The first named author thanks
	Indian Institute of Technology, Madras for PhD fellowship. The second
	author was supported by MATRICS SERB project number MTR/2022/000484.

\end{document}